\newtheorem{theorem}{Theorem}
\newtheorem{proposition}[theorem]{Proposition}
\newtheorem{definition}[theorem]{Definition}
\newtheorem{corollary}[theorem]{Corollary}
\newtheorem{remark}[theorem]{Remark}
\DeclareMathOperator*{\esssup}{ess\,sup}
\newtheorem{thm}{Theorem} 
\newtheorem{conjecture}{Conjecture}
\newcommand{\one}{\mathbf{1}}
\newcommand{\rr}{\mathbb}
\newcommand{\ic}{\mathcal}
\newcommand{\ci}{\tilde{\chi}}
\newcommand{\ds}{\displaystyle}
\newtheorem{question*}{Question}
\newtheorem{conjecture*}{Conjecture}
\newtheorem*{main*}{\underline{Induction statement}}
\newcommand{\lft}{\left|}
\newcommand{\rg}{\right|}
\newcommand{\calD}{\mathcal{D}}
\newcommand{\calS}{\mathcal{S}}
\newcommand{\ch}{{\mathrm ch}}
\def\Xint#1{\mathchoice
   {\XXint\displaystyle\textstyle{#1}}%
   {\XXint\textstyle\scriptstyle{#1}}%
   {\XXint\scriptstyle\scriptscriptstyle{#1}}%
   {\XXint\scriptscriptstyle\scriptscriptstyle{#1}}%
   \!\int}
\def\XXint#1#2#3{{\setbox0=\hbox{$#1{#2#3}{\int}$}
     \vcenter{\hbox{$#2#3$}}\kern-.5\wd0}}
\def\aver#1{\Xint-_{#1}}
\title[Sparse bilinear forms for Bochner-Riesz multipliers]{Sparse bilinear forms for Bochner Riesz multipliers and applications}
\author{Cristina Benea}
\address{Cristina Benea, CNRS - Universit\'{e} de Nantes, Laboratoire Jean Leray, Nantes 44322, France}
\email{cristina.benea@univ-nantes.fr}
\author{Fr\'{e}d\'{e}ric Bernicot}
\address{Fr\'{e}d\'{e}ric Bernicot, CNRS - Universit\'{e} de Nantes, Laboratoire Jean Leray, Nantes 44322, France}
\email{frederic.bernicot@univ-nantes.fr}
\author{Teresa Luque}
\address{Teresa Luque, Instituto de Ciencias Matem\'aticas CSIC-UAM-UC3M-UCM, C/ Nicol\'as Ca\-bre\-ra, 13-15, 28049 Madrid, Spain} 
\email{teresa.luque@icmat.es}
\date{\today}
\keywords{Bochner-Riesz multipliers, Sparse operators, weighted estimates}
\subjclass{42B15, 42B25, 42B35}
\thanks{The authors are supported by ERC project FAnFArE no. $637510$}
\begin{document}

\begin{abstract} We use the very recent approach developed by Lacey in \cite{LaceyA_2} and extended by Bernicot-Frey-Petermichl in \cite{weights_beyond_CZ}, in order to control Bochner-Riesz operators by a sparse bilinear form. In this way, new quantitative weighted estimates, as well as vector-valued inequalities are deduced. 
\end{abstract}

\maketitle

\section{Introduction}

During the last ten years it has been of great interest to obtain optimal operator norm estimates in Lebesgue spaces endowed with Muckenhoupt weights. More precisely, one asks for the growth of the norm of certain operators, such as the Hilbert transform or the Hardy-Littlewood maximal function, with respect to a characteristic assigned to the weight. Originally, the main motivation for sharp estimates of this type came from various important applications to partial differential equations. See for example  Fefferman-Kenig-Pipher \cite{FKP}, Astala-Iwaniec-Saksman \cite{AIS}, Petermichl-Volberg \cite{PetermichlVolberg}. The optimal result for Calder\'on-Zygmund operators (C-Z operators), which corresponds to the so-called {\it $A_2$ conjecture}, was first obtained by Hyt\"onen \cite{Hytonen}. 

Following the initial proof of the $A_2$ theorem, that involved the control of C-Z operators by dyadic operators, several other proofs were presented, each of them simplifying the proof and contributing to a better understanding of the field. Firstly, Lerner \cite{Lerner2, Lerner3} proved that the norm of a C-Z operator in a Banach function space is dominated by \emph {sparse} positive operators. After that, Lerner-Nazarov \cite{LernerNazarov} and Conde-Rey \cite{Conde-Rey} simultaneously simplified the norm control by a pointwise bound. Very recently, Lacey \cite{LaceyA_2} introduced a new method of establishing such pointwise control  by bringing into play the maximal truncations of the C-Z operator.

It turns out that among the family of dyadic intervals the operator is acting on, there is a certain \emph{sparse} (in the sense of \eqref{eq:sparse}) subcollection that dictates the overall behavior. Of course, this collection depends on the input of the operator, and it can be constructed in several ways. In \cite{LaceyA_2}, the sparse family corresponds to maximal coverings of the level sets of the maximal truncation of the C-Z operator. Then the sparsity property allows one to insert the weights and recover the best power for the $A_p$ constant.

The approach developed by Lacey was extended by Bernicot-Frey-Petermichl in \cite{weights_beyond_CZ} for non-integral singular operators, such as the Riesz transform associated with the heat semigroup. In this situation, the full range of Lebesgue exponents $p$ for which these operators are bounded is a sub-range $(p_0,q_0) \subset (1,\infty)$. The theory of such operators was developed by Auscher and Martell in \cite{AuscherMartell}.

Beyond C-Z theory, pointwise estimates do not always hold and only some integral estimates are possible. To deal with these kind of operators, \cite{weights_beyond_CZ} extends the notion of sparse operators to sparse bilinear forms, deducing from here sharp norm weighted estimates for such operators. The goal of this paper is to combine the new techniques from \cite{LaceyA_2} and \cite{weights_beyond_CZ}, in order to control Bochner-Riesz multipliers by sparse bilinear forms. Once we have obtained such a result, we can immediately infer quantitative weighted norm estimates for the Bochner-Riesz operator.

In the case of Bochner-Riesz multipliers, pointwise control by sparse operators is out of question because of their range of boundedness (see \eqref{def-cond-delta}). Moreover, we emphasize that the results in \cite{weights_beyond_CZ} cannot be directly adapted to the Bochner-Riesz case since the kernel of the operators studied there have at least some decaying properties (not pointwisely but at least in some $L^p$-sense) that fail in our case. Although the kernel of the Bochner-Riesz operator is smooth, the main difficulty relies on its lack of decay at infinity. This corresponds in frequency to the lack of smoothness of the symbol, in spite of it being compactly supported.

\medskip
This article is organized as follows: in Section \ref{sec:defResult} we define the Bochner-Riesz operator and state our main results concerning the bilinear form estimate (see Theorem \ref{thm:sparse_coll}) and its applications (see Theorem \ref{thm:weightsBR-general} and Corollary \ref{cor:vector-valued}). For clarity in the exposition, in this section, we state the results in dimension two. Section \ref{sec:Control}, \ref{sec:prop}  and \ref{sec:maximal_op} are devoted to the proof of Theorem \ref{thm:sparse_coll}. Section \ref{sec:dim2} details the two dimensional case where the Bochner- Riesz conjecture is solved. Finally, Section \ref{sec:weights} presents the quantitative weighted norm inequalities (Theorem \ref{thm:weightsBR-general}) and vector-valued extensions (Corollary \ref{cor:vector-valued}) we have obtained.

\section{Definitions and Main results}\label{sec:defResult}
In order to state our main results for the sparse bilinear form that we deal with and the weighted norm estimates we obtain, we recall the definition and properties of the Bochner-Riesz multipliers.

\medskip
\paragraph{\bf Bochner-Riesz operators} In ${\mathbb R}^n$ the Bochner-Riesz operator $\ic{B}^\delta$, for $\delta \geq 0$, is the linear Fourier multiplier associated with the symbol $(1-|\xi|^2)^{\delta}_+$, where $t_+=\max(t,0)$; that is,  the Bochner-Riesz operator is defined, on the class $\ic{S}\left( \rr{R}^n \right)$ of Schwartz function, by
\begin{equation*}
 \ic{B}^\delta(f) (x):= \int_{\rr{R}^n} e^{2\pi i x\xi} (1-|\xi|^2)_+^{\delta} \widehat{f}(\xi )\, d\xi.
 \end{equation*}
Since the symbol is in $L^\infty$, $\ic{B}^\delta$ is easily bounded on $L^2({\rr R}^n)$, for every $\delta \geq 0$. The case $\delta=0$ corresponds to the so-called {\it ball multiplier}, which is known to be unbounded on $L^p(\rr{R}^n)$ if $n \geq 2$ and $p\neq 2$. This is the celebrated result of C. Fefferman from \cite{ball_multiplier}. 

The main feature of the symbol is the singularity which is supported on the whole unit sphere. The Bochner-Riesz conjecture aims to describe what extra regularity (in terms of $\delta>0$) is sufficient to make $\ic{B}^\delta$ bounded in $L^p$. More precisely, the conjecture is the following: for $p\in (1,\infty) \setminus \{2\}$ then $\ic{B}^\delta$ is $L^p$-bounded if and only if 
\begin{equation}
\label{def-cond-delta}
\delta > \delta(p):=\max\left\{n\left|\frac{1}{p} -\frac{1}{2}\right| - \frac{1}{2} , 0 \right\}.
\end{equation}
Herz \cite{Herz} proved that the condition \eqref{def-cond-delta} on $\delta$ is necessary for the $L^p$-boundedness. It is not known whether this is also sufficient, except in two dimensions, where the conjecture was completely solved  by Carleson and Sj\"olin \cite{Carleson-Sjolin}.

In dimension greater than two, the full conjecture is still open and numerous works aim to contribute to this question. We just refer the reader to \cite{Tao_restriction-conjecture}, \cite{Stein_problems}, \cite[Chapter 8, Sections 5 and 8.3]{Duo-book} \cite[Chapter 10]{GrafakosMF}, \cite[Chapter 2, Chapter 3]{LuYan} and references therein for more details about this conjecture and how it is related to other problems (linear - bilinear restriction inequalities, ...). We just recall that for $\delta> (n-1)/2$, $\ic{B}^\delta$ is bounded pointwise by the Hardy-Littlewood maximal operator, and so the study of $\ic{B}^\delta$ fits into the classical theory. But for $0<\delta<(n-1)/2$, $\ic{B}^\delta$ is relatively difficult to study because its symbol is singular along a hypersurface (the sphere) with a non-vanishing curvature, which implies that the kernel (in space) has no good decay at infinity.

\medskip

\paragraph{\bf{Sparse families and bilinear form estimates.}} For a cube  $P\in\mathbb{R}^n$ we denote by $\calD(P)$ the mesh of dyadic cubes associated to $P$. The dyadic children of $P$ are produced by dividing each side of $P$ into two equal parts, while the dyadic parent of $P$ is the cube whose sidelengths are double the corresponding sidelengths of $P$, and which contains $P$. Thus every $P\in\mathbb{R}^n$ has exactly $2^n$ dyadic children and is contained in a unique dyadic parent. 

A collection of dyadic cubes $\calS:=(P)_{P\in \calS}$ is said to be {\it sparse} if for each $P\in \calS$ one has
\begin{equation}
\sum_{Q\in \ch_{\calS}(P)} |Q| \leq \frac{1}{2} |P|, \label{eq:sparse} \end{equation}
where $\ch_{\calS}(P)$ is the collection of $\calS$-children of $P$; that is, the maximal elements of $\calS$ that are strictly contained in $P$. 

Using the above definitions, we can now formulate our main result, which we present first in a simpler form:
\begin{theorem}
\label{thm:sparse-2dim-simple}
 In ${\rr R}^2$, consider $\delta>\frac{1}{6}=\delta(\frac{6}{5})$. For any compactly supported functions $f \in L^{\frac{6}{5}}$ and $g \in L^2$, there exists a sparse collection $\calS$ (depending on $f, g$) with 
\begin{align*}
	\left| \langle \ic{B}^\delta f, g  \rangle\right| 
	\leq C \sum_{Q \in \calS}  \left(\aver{6Q} |f|^{\frac{6}{5}}\, dx\right)^{\frac{5}{6}} \left(\aver{6Q} |g|^{2}\, dx\right)^{1/2} |Q| .
\end{align*}
\end{theorem}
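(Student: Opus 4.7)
The plan is to adapt Lacey's stopping-time approach from \cite{LaceyA_2} to the bilinear setting initiated in \cite{weights_beyond_CZ}. Fix compactly supported $f, g$ and a dyadic cube $Q_0$ large enough to contain both supports and on which the relevant averages are normalized. The main step is to establish the one-step recursion
\[
|\langle \mathcal{B}^\delta(f\mathbf{1}_{3Q_0}), g\mathbf{1}_{Q_0}\rangle|
\leq C\,\langle |f|^{6/5}\rangle_{6Q_0}^{5/6}\langle |g|^{2}\rangle_{6Q_0}^{1/2}|Q_0| + \sum_i |\langle \mathcal{B}^\delta(f\mathbf{1}_{3Q_i}), g\mathbf{1}_{Q_i}\rangle|,
\]
for a pairwise disjoint family of maximal dyadic subcubes $\{Q_i\}\subset Q_0$ extracted from a carefully chosen exceptional set $E\subset Q_0$ with $|E|\leq |Q_0|/2$. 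Recursive application of this inequality on each $Q_i$ produces the sparse collection $\mathcal{S}$.

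The exceptional set $E$ will be the union of three pieces: a set where a dyadic maximal function of $|f|^{6/5}$ is large compared with $\langle |f|^{6/5}\rangle_{6Q_0}$, the analogous set for $|g|^2$, and the level set where a suitable maximal truncation $\mathcal{B}^{\delta,\ast}(f\mathbf{1}_{3Q_0})$ exceeds a large multiple of $\langle |f|^{6/5}\rangle_{6Q_0}^{5/6}$. Weak $(1,1)$ bounds for the Hardy-Littlewood maximal function handle the first two; the third requires a weak-type endpoint estimate for $\mathcal{B}^{\delta,\ast}$, which I expect to extract from the Carleson-Sj\"olin theorem combined with a dyadic annular decomposition of the kernel of $\mathcal{B}^\delta$. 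Tuning the constants yields $|E|\leq |Q_0|/2$ and hence sparsity.

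On the complement of the stopping cubes, the bilinear form splits into a \emph{diagonal} piece $\langle \mathcal{B}^\delta(f\mathbf{1}_{3Q_0}), g\mathbf{1}_{Q_0\setminus\bigcup Q_i}\rangle$ plus long-range interactions between $Q_0$ and the complement of $3Q_0$. The diagonal piece is the principal one: by Cauchy-Schwarz and the Carleson-Sj\"olin theorem, which gives $\mathcal{B}^\delta\colon L^{6/5}\to L^{6/5}\hookrightarrow L^2$ locally precisely when $\delta>\delta(6/5)=1/6$ in $\mathbb{R}^2$, we get control by $\|f\mathbf{1}_{3Q_0}\|_{6/5}\,\|g\mathbf{1}_{Q_0\setminus\bigcup Q_i}\|_2$, and on $Q_0\setminus\bigcup Q_i$ the $f$- and $g$-averages remain comparable to their values on $6Q_0$, producing exactly the claimed main term. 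The off-diagonal and tail contributions, together with pieces of $g$ sitting on the stopping cubes, are absorbed using the control encoded by $\mathcal{B}^{\delta,\ast}$ on $E$: these long-range terms are pushed onto the $Q_i$ and reabsorbed through the recursion.

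The main obstacle, relative to both \cite{LaceyA_2} and \cite{weights_beyond_CZ}, is that the kernel of $\mathcal{B}^\delta$ decays only as $|x|^{-3/2-\delta}$, so neither pointwise nor $L^p$-based off-diagonal decay of the kernel is at our disposal. To circumvent this I would decompose $\mathcal{B}^\delta=\sum_j \mathcal{B}^\delta_j$ into annular pieces concentrated at scales $2^j$, establish a rescaled $L^{6/5}\to L^2$ estimate for each $\mathcal{B}^\delta_j$ with a factor of the form $2^{-j(\delta-1/6)}$ (the gap $\delta-\delta(6/5)>0$ providing summability in $j$), and build the maximal truncation $\mathcal{B}^{\delta,\ast}$ compatibly with this decomposition so that its weak-type behavior passes through to the measure bound on the third piece of $E$. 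Obtaining this weak-type bound for the annular maximal truncation, and summing the tail contributions scale by scale so that they can be distributed on the stopping cubes with the correct sparsity loss, is where I expect the bulk of the technical work to lie.
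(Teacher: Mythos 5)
Your overall architecture (stopping-time recursion, exceptional set built from maximal functions of $f$, $g$ and a maximal truncation of $\mathcal{B}^\delta$, diagonal term into the recursion, off-diagonal tails absorbed via the truncation, annular decomposition summed using the gap $\delta-\tfrac16>0$) is exactly the paper's scheme, so the skeleton is sound. However, there are two genuine gaps in how you propose to supply the analytic input.

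First, the engine of the whole argument is a \emph{local $L^{6/5}\to L^2$ improving} estimate for $\mathcal{B}^\delta$ (and for each annular piece, with a gain $2^{-k\rho}$, $\rho>1/6$). You derive it from ``Carleson--Sj\"olin gives $\mathcal{B}^\delta\colon L^{6/5}\to L^{6/5}\hookrightarrow L^2$ locally,'' but on a bounded cube the inclusion goes the other way: $L^2(Q)\subset L^{6/5}(Q)$, not $L^{6/5}(Q)\subset L^2(Q)$. No $L^p\to L^p$ bound can yield the needed $L^{6/5}\to L^2$ smoothing. The correct source is the Tomas--Stein restriction theorem $R(6/5\to 2)$ in $\mathbb{R}^2$ (in the paper, via Tao's weak-type endpoint estimate for the Littlewood--Paley pieces $S_k$ supported on frequency annuli of width $2^k$ around the sphere), combined with the rapid spatial decay of $\check{s}_k$ to localize; Carleson--Sj\"olin enters only to guarantee uniform $L^{p_1}$ bounds for $S_k$ at an auxiliary exponent $p_1=4/3$. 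Without restriction-theory input your Cauchy--Schwarz step on the diagonal term, your off-diagonal estimate, and your weak-type bound all collapse.

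Second, your maximal truncation $\mathcal{B}^{\delta,\ast}$ is left undefined, and its definition is precisely where the difficulty is hidden. A pointwise maximal truncation (as in Lacey's C--Z argument) cannot be weak-type $(6/5,6/5)$ here --- pointwise control is exactly what fails below the critical index, which is why one must pass to bilinear sparse forms in the first place. The paper's operators $\mathcal{B}^{\delta,\ast}$ and $\mathcal{B}^{\delta,\ast\ast}$ are built with an \emph{internal local $L^2$ average} over balls $B(y,\epsilon)$, which is what makes the $L^{6/5}\to L^2$ restriction estimates applicable scale by scale; their weak $(6/5,6/5)$ bound then requires a nontrivial interpolation between an $L^\infty$ kernel-decay estimate ($\check s_k(z)\lesssim 2^k(1+|z|)^{-1/2}(1+2^k|z|)^{-N}$ in $\mathbb{R}^2$) and the $L^{p_1}$ bounds for $S_k$, followed by restricted-type interpolation --- this is the content of the paper's Propositions 8--10 and cannot be waved through as ``extracted from Carleson--Sj\"olin plus an annular decomposition.'' You also need a second maximal operator (the paper's $\mathcal{B}^{\delta,\ast\ast}$) to control $\mathcal{B}^\delta f$ a.e.\ on $Q_0\setminus E$, since no pointwise bound by the off-diagonal truncation alone is available there.
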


In fact, we can obtain a more general result in dimension $n \geq 2$, which is formulated in Section \ref{sec:Control} Theorem \ref{thm:sparse_coll}. We emphasize that this is the first result which describes a control of Bochner-Riesz multipliers via \emph{sparse} operators. However, it was long known that the Bochner-Riesz multipliers can be split into \emph{``local"} operators, at different scales, a feature that we are also using implicitly.

We will now detail some applications for weighted estimates for the Bochner-Riesz operator,  as well as certain vector-valued extensions.

\medskip

\paragraph{\bf Weighted norm inequalities and vector-valued estimates} Weighted norm inequalities for the Bochner-Riesz operators and their corresponding maximal versions have been studied in several different contexts, and there are still many open questions. In the present work we are interested in weighted estimates involving Muckenhoupt weights and a few results are known in this sense. For other weighted estimates existing in the literature we refer for instance to Cordoba \cite{CorDisc}, Carbery \cite{CarMax}, Carbery-Seeger \cite{CarSeeger} where the Fefferman-Stein type weighted estimates have been thoroughly studied, or to Ciaurri-Stempak-Varona \cite{CiVaStem} for more general two-weights norm inequalities.

The $A_p$ theory for the Bochner-Riesz operator matches perfectly whenever $\delta > (n-1)/2$.  In this case, for every $p\in(1,\infty)$ and $w\in A_p$
\begin{equation*}
\|\ic B^\delta\|_{L^p(w)\to L^p(w)}\leq [w]_{A_p}^{\frac{1}{p-1}}.
\end{equation*}
This is a consequence of Buckley's sharp estimate for the Hardy-Littlewood maximal operator (see \cite{Buckley}).

For the critical index $\delta=(n-1)/2$, Shi and Sun \cite{ShiSun} proved  that $\ic B^{(n-1)/2}$ also maps $L^p(w)$ into $L^p(w)$  for every $p\in(1,\infty)$ and $w\in A_p$ (see the alternative proof of Duoandikoetxea and Rubio de Francia \cite{JaviRdF}). Moreover, in this particular case, Vargas  \cite{Vargas} showed that $\ic{B}^{\delta}$ is of weak type $(1,1)$ with respect to $A_1$ weights. However, the optimal quantitative estimate, i.e. the sharp dependance of the norm with respect to the characteristic of the weight, remains unknown in the critical index case. See \cite{CorrigeLiSun} and \cite{LPR} for further discussion in this sense.

Below the critical index,  since $\ic{B}^\delta$ is not bounded on the whole range $(1,\infty)$, we cannot expect not even one single $p_0$ so that $\ic{B}^\delta: L^{p_0}(w) \to L^{p_0}(w)$ for every weight $w \in A_{p_0}$. As the work \cite{AuscherMartell} points out, it is natural in this kind of situation to look for weights in a subclass of $A_p$, obtained as an intersection between a Muckenhoupt class and a Reverse H\"older class. In fact, as proved in \cite{Ap-RH}, this subfamily of weights corresponds to fractional powers of $A_p$ weights. See Section \ref{sec:weights} Proposition \ref{prop:Ap-Rh} for the precise equivalence. To our knowledge, the results concerning $A_p$ weights in this range of $\delta$ are the following:

\begin{thm}\label{th:knownW} Let $n\geq 2$, then:
\begin{enumerate}[label=(\roman*), ref=\roman*]
 \item {\rm Christ \cite{Christ-weightedBR-aeConv}:} 
\label{item-christ} If $(n-1)/2(n+1)< \delta < (n-1)/2$, then $\ic{B}^\delta$ is bounded on $L^2 (w)$ whenever $w^{n/(1+2\delta)}\in A_1$.
 \item {\rm Carro-Duoandikoetxea-Lorente \cite{CDL}:}
 \label{item-CarroDuo} If $0< \delta <(n-1)/2$, then $\ic{B}^\delta$ is bounded on $L^2 (w)$ for every weight $w$ such that $w^{(n-1)/2\delta}\in A_2$. 
\item{\rm Duoandikoetxea-Moyua-Oruetxebarria-Seijo \cite{DMOS}:} If $0 < \delta < \frac{n-1}{2}$, then $\ic{B}^\delta$ is bounded on $L^2 (w)$ for every radial weight $w$ such that $w^{n/(1+2\delta)} \in A_2$.
\end{enumerate}
\end{thm}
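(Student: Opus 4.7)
The three parts of Theorem~\ref{th:knownW} are known results from the literature; I will describe a single template capturing all of them, with the differences appearing only at the final stage.

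The plan is to write $(1-|\xi|^2)_+^\delta = \sum_{j\geq 0} m_j(\xi)$ where $m_j$ is smooth and supported in the annulus $\{\xi : 1-|\xi|^2 \sim 2^{-j}\}$, giving a decomposition $\ic{B}^\delta = \sum_j T_j$ with $T_j f = (m_j \widehat{f})^\vee$. Standard stationary phase yields $|K_j(x)| \lesssim 2^{-j\delta}(1+|x|)^{-(n-1)/2}$ on $|x|\lesssim 2^j$, with rapid decay beyond. Each $T_j$ is thus essentially localized at spatial scale $2^j$ with a prefactor $2^{-j\delta}$. Having produced this decomposition, I would dominate $|T_j f|$ up to a constant by an averaging operator at scale $2^j$: a Hardy-Littlewood type maximal truncation in cases (i) and (ii), and a radial/spherical averaging operator in case (iii).

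The weighted bound at each scale is then obtained by specialized means. In case (i), the assumption $\delta > (n-1)/(2(n+1))$ gives an unweighted Stein-Tomas restriction bound on $T_j$ with a quantitative gain, which combined with the Fefferman-Stein $A_1$ inequality produces the desired $L^2(w)$ bound for $w^{n/(1+2\delta)} \in A_1$. Case (ii) uses only the raw $L^2$ bound of $T_j$ coupled with Muckenhoupt's $A_2$ theorem applied to the scale-$2^j$ maximal operator; the weaker input forces the higher exponent $(n-1)/(2\delta)$ on $w$. In case (iii), the radiality of $w$ allows a reduction via polar coordinates to a one-dimensional weighted problem, where sharper estimates recover the same exponent as in (i) but for $A_2$ rather than $A_1$. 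One finally sums over $j\geq 0$: the factor $2^{-j\delta}$ must absorb the scale-dependent weighted constant, and the fractional power on $w$ is chosen so that this summation converges.

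The main obstacle is the scale-by-scale weighted step, specifically translating a fractional $A_p$ assumption on $w$ into a polynomially controlled weighted norm for the scale-$2^j$ maximal operator. One cannot apply the Fefferman-Stein or Muckenhoupt inequalities directly to $w$; instead, one exploits the reverse-H\"older characterization of the class $\{w : w^\sigma \in A_p\}$ (the paper's Proposition~\ref{prop:Ap-Rh}) and tracks carefully how the weight constant transforms under the dilation inherent in the scale-$2^j$ maximal function. The exponents $n/(1+2\delta)$ and $(n-1)/(2\delta)$ are forced by the balance between the geometric decay $2^{-j\delta}$ and the weight's growth at scale $2^j$; in the radial case, the one-dimensional reduction makes this balance considerably cleaner to verify.
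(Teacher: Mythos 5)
First, a point of comparison: the paper does not actually prove Theorem~\ref{th:knownW}. It is a list of prior results quoted with citations to Christ, Carro--Duoandikoetxea--Lorente and Duoandikoetxea--Moyua--Oruetxebarria--Seijo, and the only proof-like content the paper offers is the remark that item (ii) follows by complex interpolation (Stein's interpolation with change of measure) between the critical-index bound $\ic{B}^{(n-1)/2}:L^2(w_0)\to L^2(w_0)$ for $w_0\in A_2$ (Shi--Sun) and the unweighted $L^2$ boundedness of $\ic{B}^0$: writing $\delta=\theta\,(n-1)/2$, the interpolated weight is $w=w_0^{\theta}$, i.e.\ $w^{(n-1)/(2\delta)}=w_0\in A_2$. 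That two-line argument is both shorter and cleaner than your scale-by-scale template for part (ii), and it explains where the exponent $(n-1)/(2\delta)$ comes from with no computation.

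Second, your template has a genuine gap at its core. The step ``dominate $|T_jf|$ up to a constant by an averaging operator at scale $2^j$'' cannot work if it uses only the kernel size bound $|K_j(x)|\lesssim 2^{-j\delta}(1+|x|)^{-(n-1)/2}$ on $|x|\lesssim 2^j$: integrating that bound gives $|T_jf(x)|\lesssim 2^{-j\delta}\,2^{j(n+1)/2}\,Mf(x)$, which is summable in $j$ only for $\delta>(n+1)/2$ --- far above even the trivial threshold $(n-1)/2$ and useless in the stated range. Below the critical index one must exploit $L^2$ orthogonality/restriction information about $T_j$, which is not a pointwise statement; this is precisely why the present paper works with bilinear forms and local $L^{p_0}\to L^2$ estimates rather than pointwise domination. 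You do invoke Stein--Tomas for part (i), but the decisive step --- converting the hypothesis $w^{n/(1+2\delta)}\in A_1$ into a bound for $\|T_j\|_{L^2(w)\to L^2(w)}$ growing strictly slower than $2^{j\delta}$ --- is exactly what you label ``the main obstacle'' and then do not carry out; saying that ``the exponents are forced by the balance'' is a description of what must be proved, not a proof. In Christ's paper this step rests on duality, factorization of $A_1$ weights and a trace/restriction lemma, and the radial case (iii) uses a genuinely different one-dimensional reduction; none of these mechanisms appear in your outline, which therefore stops exactly where the difficulty begins.
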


We remark in here that the results in papers \cite{Christ-weightedBR-aeConv} and \cite{CDL} are formulated for the maximal Bochner Riesz operator and therefore they are not expected to be optimal. Moreover, we note that the statement in \eqref{item-CarroDuo} can be obtained through complex interpolation between the $L^2$ weighted estimate for the critical index $\delta=(n-1)/2$ and the unweighted $L^2$  boundedness of $\ic{B}^0$. 

The best weighted result one can expect for $\ic{B}^\delta$, in terms of powers of $A_p$, is \eqref{item-christ}, with $A_1$ replaced by $A_2$:
\begin{equation}
\label{eq:optimal-res-weights}
\ic{B}^\delta : L^2(w) \to L^2(w), \quad \text{   for all $w$ such that    } w^{n/(1+2\delta)}\in A_2.
\end{equation}
Observe that if true, \eqref{eq:optimal-res-weights} would imply the full Bochner-Riesz conjecture, through extrapolation.

We obtain quantitative weighted estimates for $\ic{B}^\delta$ in a wide range of exponents $p$, that is not necessarily optimal. These weighted estimates follow from the control of $\ic{B}^\delta$ by a sparse bilinear form, as described in Theorem \ref{thm:sparse-2dim-simple} (dimension $n=2$) and its general version (Theorem \ref{thm:sparse_coll}, Section \ref{sec:Control}). We first indicate the simpler two-dimensional case, for $p\in(6/5,2)$. The general result for higher dimensions can be found in Theorem \ref{thm:weightsBR-general} (cf Section \ref{sec:weights}).

\begin{theorem}\label{thm:weightsBR-simple-2dim}
Let $\delta> 1/6$ and $6/5 \leq p <2 $. Then for all weights $w \in A_{\frac{5p}{6}} \cap RH_{\left( \frac{2}{p}  \right)'}$ we have
\begin{equation}\label{eq:WQuantdim2}
\|\ic{B}^\delta\|_{L^p(w) \to L^p(w)}\leq\left([w]_{A_{\frac{5p}{6}}}[w]_{RH_{(\frac{2}{p})'}}\right)^{\alpha}
\end{equation}
with $\alpha:=max\{1/(p-6/5),1/(2-p)\}$.
\end{theorem}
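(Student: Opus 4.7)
The strategy is to feed the sparse bilinear form control of Theorem \ref{thm:sparse-2dim-simple} into the quantitative weighted theory for $(r,s)$-sparse forms developed in \cite{weights_beyond_CZ}. By density and duality,
\begin{equation*}
\|\ic{B}^\delta f\|_{L^p(w)}=\sup_{\|g\|_{L^{p'}(\sigma)}\leq 1}|\langle \ic{B}^\delta f, g\rangle|, \qquad \sigma:=w^{1-p'},
\end{equation*}
so for every admissible test pair $(f,g)$, Theorem \ref{thm:sparse-2dim-simple} produces a sparse family $\calS=\calS(f,g)$ with
\begin{equation*}
|\langle \ic{B}^\delta f, g\rangle|\lesssim \Lambda_{\calS}^{6/5,2}(f,g):=\sum_{Q\in\calS}\Bigl(\aver{6Q}|f|^{6/5}\Bigr)^{5/6}\Bigl(\aver{6Q}|g|^{2}\Bigr)^{1/2}|Q|.
\end{equation*}
Thus the task reduces to proving a uniform (in $\calS$) estimate of $\Lambda_{\calS}^{6/5,2}$ as a bilinear form on $L^p(w)\times L^{p'}(\sigma)$ with the asserted dependence on the weight characteristic.

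For this I would invoke the general principle that a $(r,s)$-sparse form is bounded from $L^p(w)\times L^{p'}(\sigma)$ whenever $r<p<s'$ and $w\in A_{p/r}\cap RH_{(s'/p)'}$, with sharp quantitative dependence of the form
\begin{equation*}
\Lambda_{\calS}^{r,s}(f,g)\lesssim \bigl([w]_{A_{p/r}}[w]_{RH_{(s'/p)'}}\bigr)^{\max\{1/(p-r),\,1/(s'-p)\}}\|f\|_{L^p(w)}\|g\|_{L^{p'}(\sigma)}.
\end{equation*}
This is essentially contained in \cite{weights_beyond_CZ}. Specializing to $(r,s)=(6/5,2)$, so that $s'=2$, $A_{p/r}=A_{5p/6}$ and $(s'/p)'=(2/p)'$, produces precisely the weight class and the exponent $\alpha=\max\{1/(p-6/5),1/(2-p)\}$ of the statement. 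The sparse-form bound itself is proved via a principal-cube construction carried out relative to both $f$ (measured with $w$) and $g$ (measured with $\sigma$): one stops at cubes where the corresponding $L^{6/5}$ or $L^{2}$ weighted average doubles, uses sparsity to sum geometric series inside each generation, and optimizes between the $A_{p/r}$ and $RH_{(s'/p)'}$ testing constants; the two competing powers $1/(p-r)$ and $1/(s'-p)$ reflect the endpoints $p\to r^+$ and $p\to (s')^-$.

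The expected technical wrinkle, rather than a serious obstacle, is that the averages in $\Lambda_{\calS}^{6/5,2}$ live on the enlargements $6Q$, while the abstract sparse-form estimates are typically phrased for averages over $Q$ itself. This is resolved by partitioning $\{6Q:Q\in\calS\}$ into a bounded number of shifted dyadic subfamilies, each still sparse up to a dimensional constant; the weighted bound then survives with an extra absolute factor. Combining with the duality identity above yields \eqref{eq:WQuantdim2} with the stated $\alpha$.
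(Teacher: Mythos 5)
Your proof takes exactly the paper's route: the paper deduces Theorem \ref{thm:weightsBR-simple-2dim} (as the $p_0=6/5$, $q_0=2$ case of Theorem \ref{thm:weightsBR-general}) by feeding the sparse bilinear form of Theorem \ref{thm:sparse-2dim-simple} into the quantitative weighted estimate for sparse bilinear forms, namely Proposition 6.4 of \cite{weights_beyond_CZ}, which is precisely the $(r,s)=(6/5,2)$ instance of the general principle you invoke, yielding the class $A_{5p/6}\cap RH_{(2/p)'}$ and the exponent $\alpha=\max\{1/(p-6/5),1/(2-p)\}$. Your added sketch of the principal-cube argument behind that proposition and your remark on handling the $6Q$ enlargements are consistent with that reference, so the argument is complete.
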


We emphasize that \eqref{eq:WQuantdim2} is not expected to be sharp; see Section \ref{sec:weights}, Remark \ref{re:compareWtQuan} for further discussions on this aspect.

Moreover, as explained later in Section \ref{sec:dim2}, there is some room to play with the exponents. In particular, this allows us to get some new vector-valued estimates for Bochner-Riesz operators (see  Corollary \ref{cor:vector-valued}).


\section{Control of the Bochner-Riesz multipliers by a sparse bilinear form}\label{sec:Control}
In this section we prove the general version of Theorem \ref{thm:sparse-2dim-simple}. Aiming that, we introduce a new critical index $\bar{\delta}_n$.

Let $p_0\in(1,2)$. Consider $p_1$ given by
\begin{equation}
\label{def:p_1}
 p_1:= 2\left(\frac{3}{2}-\frac{1}{p_0} \right) \in (1,2),
 \end{equation}
and let $\tilde \delta(p_1)$ be the smallest positive exponent such that 
$$ \ic{B}^{\tilde \delta(p_1)} : L^{p_1} \to L^{p_1}.$$
We know that $\tilde \delta(p_1) \geq\delta(p_1)$, and their equality is equvalent to the Bochner-Riesz conjecture.

Then $\bar{\delta}_n$ is defined as 
\begin{equation}
\label{eq:delta_n-bar}
\bar{\delta}_n(p_0):=\tilde \delta(p_1) + \frac{n-1}{2} \left(\frac{1}{p_0}-\frac{1}{2}\right).
\end{equation}


\begin{theorem}\label{thm:sparse_coll}
For any $1<p_0<2$, and any $\delta > \bar{\delta}_n(p_0)$ given by \eqref{eq:delta_n-bar}, and all functions $f, g$ compactly supported, there exists a sparse collection $\calS$ (depending on $f,g$) with 
\begin{align}\label{eq:bilin}
	\left| \langle\ic{B}^\delta(f), g \rangle \right| 
	\leq C \sum_{Q \in \calS}  \left(\aver{6Q} |f|^{p_0}\, dx\right)^{1/p_0} \left(\aver{6Q} |g|^{2}\, dx\right)^{1/2} |Q| .
\end{align}
\end{theorem}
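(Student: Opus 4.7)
The plan is to implement a Lacey-style stopping-time construction, adapted to the Bochner-Riesz setting through a frequency-side scale decomposition that compensates for the operator's lack of pointwise kernel decay. Enclose the supports of $f$ and $g$ in a large dyadic cube $Q_0$, and build $\calS$ recursively: for $Q\in\calS$, declare its $\calS$-children to be the maximal dyadic subcubes $Q'\subsetneq Q$ on which one of three bad events occurs — (i) $(\aver{6Q'}|f|^{p_0})^{1/p_0}>C(\aver{6Q}|f|^{p_0})^{1/p_0}$, (ii) $(\aver{6Q'}|g|^{2})^{1/2}>C(\aver{6Q}|g|^{2})^{1/2}$, or (iii) a suitable localized maximal truncation $\ic{B}^\delta_{\sharp,Q}f$ exceeds on $Q'$ a constant multiple of the target right-hand side of \eqref{eq:bilin}. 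Standard weak-type bounds for the Hardy-Littlewood maximal function together with an appropriate weak-type estimate for $\ic{B}^\delta_{\sharp,Q}$ (which is presumably the technical heart of Section \ref{sec:maximal_op}) force the union $E_Q$ of these children to have measure at most $|Q|/2$, granting the sparsity condition \eqref{eq:sparse}.

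The bulk of the argument then reduces to proving the local bilinear estimate
\begin{equation*}
\bigl|\bigl\langle\ic{B}^\delta(f\mathbf{1}_{Q\setminus E_Q}),\,g\mathbf{1}_Q\bigr\rangle\bigr|
\lesssim \Bigl(\aver{6Q}|f|^{p_0}\Bigr)^{1/p_0}\Bigl(\aver{6Q}|g|^{2}\Bigr)^{1/2}|Q|,
\end{equation*}
because iterating it along the sparse tree, with $Q\setminus E_Q=Q\setminus\bigcup_{Q'\in \ch_{\calS}(Q)}Q'$ providing a telescoping decomposition of $Q_0$, assembles directly into \eqref{eq:bilin}. To prove this local estimate, I would invoke the classical dyadic decomposition $\ic{B}^\delta=\sum_{j\ge 0}B_j^\delta$ in frequency, where the symbol of $B_j^\delta$ is supported at distance $\sim 2^{-j}$ from the unit sphere; each piece has kernel with effective spatial scale $2^j$ and an amplitude gain of $2^{-j\delta}$, and by the very definition of $\tilde\delta(p_1)$ it is $L^{p_1}\to L^{p_1}$-bounded with norm $\lesssim 2^{-j(\delta-\tilde\delta(p_1))}$.

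Splitting the sum over $j$ at the threshold $2^j\sim\ell(Q)^{-1}$ (in a suitable rescaling), the ``in-scale'' pieces are controlled directly by duality with the $L^{p_1}$ bound for $B_j^\delta$, combined with H\"older on appropriately enlarged cubes. The volume mismatch required to convert an $L^{p_1}$-norm into the target $L^{p_0}$-average for $f$ and $L^2$-average for $g$ contributes a factor whose balance with $2^{-j(\delta-\tilde\delta(p_1))}$ yields a geometric series in $j$ summable precisely when $\delta>\tilde\delta(p_1)+\tfrac{n-1}{2}(\tfrac{1}{p_0}-\tfrac{1}{2})=\bar\delta_n(p_0)$, which is exactly the origin of the critical index \eqref{eq:delta_n-bar}. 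The ``out-of-scale'' pieces, with $2^j$ much larger than $\ell(Q)$, correspond to the long-range part of the kernel; they cannot be handled by in-scale $L^{p_1}$ boundedness, but instead are controlled pointwise on $Q\setminus E_Q$ by the maximal truncation $\ic{B}^\delta_{\sharp,Q}f$, whose threshold chosen in stopping condition (iii) is precisely the target right-hand side.

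I expect the main obstacle to be the construction and weak-type analysis of the maximal truncation $\ic{B}^\delta_{\sharp,Q}$. Unlike in the Calder\'on-Zygmund case, no Cotlar-type pointwise inequality is available: the Bochner-Riesz kernel has no spatial decay, so the truncation at scale $\ell(Q)$ cannot be compared pointwise to the operator itself via a simple tail bound. The correct formulation must exploit the oscillation of each frequency piece $B_j^\delta$ at its own scale, summing against the amplitude gain $2^{-j\delta}$, and one must verify a weak-type endpoint with the \emph{same} index $\bar\delta_n(p_0)$ as in the bilinear form — otherwise the stopping threshold and the in-scale estimate would not match. Once that maximal operator bound is in hand, together with the $L^{p_1}$ decay for the frequency-localized pieces, the sparse stopping-time argument proceeds along the lines of \cite{LaceyA_2} and \cite{weights_beyond_CZ}.
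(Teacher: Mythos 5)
Your overall architecture --- stopping time, frequency decomposition into pieces supported near the sphere, and a maximal truncation whose weak-type bound is the technical heart --- matches the paper's strategy, and you correctly identify that no Cotlar-type pointwise comparison is available. But there are two genuine gaps. First, your reduction decomposes the \emph{input}: you write $f=\sum_{Q\in\calS}f\one_{Q\setminus E_Q}$ and pair each piece only with $g\one_Q$. The dropped cross terms $\langle \ic{B}^\delta(f\one_{Q\setminus E_Q}),g\one_{Q_0\setminus Q}\rangle$ are exactly where the lack of spatial decay of the Bochner--Riesz kernel bites: $\ic{B}^\delta(f\one_{Q\setminus E_Q})$ is in no sense concentrated near $Q$, so these terms cannot be absorbed by a tail estimate. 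The paper's recursion is organized on the \emph{output} side and is exact at every step: on each selected cube one writes $\ic{B}^\delta(f)\one_{Q_j}=\ic{B}^\delta(f\one_{6Q_j})\one_{Q_j}+\ic{B}^\delta(f\one_{(6Q_j)^c})\one_{Q_j}$, iterates on the first term, and estimates the second (the only genuinely off-diagonal object) by combining the maximal operator $\ic{B}^{\delta,*}$ for the frequencies close to the sphere with the off-diagonal decay of the pieces $S_k$ whose kernels live at scales below $\ell(Q_j)$ (Proposition \ref{prop-Heisenberg}).

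Second, your quantitative input for the ``in-scale'' pieces --- the $L^{p_1}\to L^{p_1}$ bound $\lesssim 2^{-j(\delta-\tilde\delta(p_1))}$ plus H\"older on enlarged cubes --- cannot produce the asymmetric form \eqref{eq:bilin}. To put an $L^{p_0}$-average on $f$ with $p_0<2$ and an $L^2$-average on $g$, one needs the localized pieces to genuinely \emph{improve} integrability, $L^{p_0}\to L^2$ on balls; H\"older and ``volume mismatch'' only move exponents in the wrong direction. In the paper this local smoothing is exactly where the Tomas--Stein restriction theorem enters (via Tao's weak-type endpoint, Theorem \ref{thm:tao-weak-endpoint}, and the computation \eqref{eq:S_k_1}); your proposal never invokes restriction, which is the engine of Propositions \ref{prop-Heisenberg} and \ref{non-Heisenberg}. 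Relatedly, you locate the origin of the critical index $\bar\delta_n(p_0)$ in the summability of the in-scale series; in the paper those estimates only require the smaller restriction-theoretic index $\rho_n(p_0)$, and the constraint $\delta>\bar\delta_n(p_0)=\tilde\delta(p_1)+\frac{n-1}{2}(\frac1{p_0}-\frac12)$ arises in the weak-type $(p_0,p_0)$ bound for $\ic{B}^{\delta,**}$, by interpolating the kernel bound $|\check s_k(z)|\lesssim 2^k(1+|z|)^{-\frac{n-1}{2}}(1+2^k|z|)^{-N}$ against the $L^{p_1}$ operator norm $2^{-k\tilde\delta(p_1)}$ of $S_k$ and then using restricted-type interpolation.
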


\begin{remark}\label{re:p0delta}
Alternatively, the above theorem can be reformulated, so that, for any $\delta>0$, there exists a critical exponent $p_0(\delta)\in(1,2)$ for which the sparse form becomes
\begin{align*}
	\left| \int  \ic  B^\delta(f) \cdot g\, dx \right| 
	\leq C \sum_{Q \in \calS}  \left(\aver{6Q} |f|^{\tilde{p}}\, dx\right)^{1/\tilde{p}} \left(\aver{6Q} |g|^{2}\, dx\right)^{1/2} |Q|,
\end{align*}
where $\tilde{p}\in (p_0(\delta),2)$.
\end{remark}

The proof for Theorem \ref{thm:sparse_coll} follows ideas from \cite{LaceyA_2} and \cite{weights_beyond_CZ}. We first sketch the steps of the proof and then we introduce the definitions of the new maximal operators that are necessary to complete the proof. We refer the reader to \cite{weights_beyond_CZ}, where the whole strategy is presented in details, in the case of a smooth Fourier multiplier.

\medskip
\begin{proof}[Sketch of the proof for Theorem \ref{thm:sparse_coll}]
Assuming $f$ and $g$ are supported inside $6Q_0$, we define an exceptional set in the following way:
\begin{equation*}
E:=\left\lbrace x \in Q_0: \ic{B}^{\delta,*}(f)(x)+ \ic{B}^{\delta,**}(f)(x) +\ic{M}_{p_0}\left[ f \right](x)> C \left( \aver{6 Q_0} \lft f \rg^{p_0} \right)^{1/{p_0}} \right\rbrace,
\end{equation*}
for some large enough numerical constant $C$.
The operators $\ic{B}^{\delta,*}$ and $\ic{B}^{\delta,**}$ are certain maximal operators which will be defined later (and will be proved to be of weak type $(p_0,p_0)$), and $\ic{M}_{p_0}$ is the $L^{p_0}$- Hardy-Littlewood maximal function. Then $E$ is a proper open subset or ${\rr R}^n$ and so we may consider $(Q_j)_j$ to be a covering of $E$ by maximal dyadic cubes contained inside $E$: $E=\bigcup_{j} Q_j$. Using the weak-type $(p_0,p_0)$ boundedness of $\ic{B}^{\delta, *}, \ic{B}^{\delta, **}$ and $\ic{M}_{p_0}$, we also know that
$$ \lft E \rg=\sum_j |Q_j| \lesssim C^{-1} |Q_0|.$$
Maximality implies that $\widetilde{Q_j}$, the dyadic parent of $Q_j$, is not entirely contained in $E$. Moreover, for $C$ large enough, $\widetilde{Q_j} \subset Q_0$ and we can conclude that $\ic{B}^{\delta, *}, \ic{B}^{\delta, **}$ and $\ic{M}_{p_0}$ are going to be small at certain points of $\widetilde{Q_j}$. 

The sparse family of cubes from Theorem \ref{thm:sparse_coll} will be constructed iteratively, as in \cite{LaceyA_2}. We initialize $\ds \ic{Q}_{sparse}:=\lbrace Q_0 \rbrace$ and we will see that $\ds \ic{B}^\delta(f) \cdot \one_{Q_0 \setminus E}$ is under control, and hence we only need to deal with $\ic{B}^\delta(f) \cdot \one_{Q_j}$ for every $Q_j$ as above. We note that 
\begin{equation}
\label{eq:BR-split}
 \ic{B}^\delta(f)  \cdot \one_{Q_j}= \underbrace{\ic{B}^\delta \left(f \cdot \one_{6 Q_j} \right) \cdot \one_{Q_j}}_{\text{this goes in the iteration process}} + \ic{B}^\delta \left( f \cdot \one_{\left( 6 Q_j \right)^c} \right) \cdot \one_{Q_j}.
\end{equation}
The first term will be used in the inductive procedure, hence we update $\ds \ic{Q}_{sparse}=\ic{Q}_{sparse} \cup \lbrace Q_j: Q_j \subseteq E \text{  maximal dyadic interval}  \rbrace$. The second one is the ``off-diagonal term", for which we will prove that
\begin{equation}
\label{eq:off-diag}
\sum_{Q_j} \lft  \int_{Q_j} \ic{B}^\delta \left( f \cdot \one_{\left( 6 Q_j \right)^c} \right)(x)  \bar{g}(x) dx \rg \lesssim \left( \aver{6Q_0} \lft f\rg^{p_0} dx \right)^{1/{p_0}} \cdot \left( \aver{6Q_0} \lft g\rg^{2} dx \right)^{1/2} \cdot \lft Q_0 \rg.
\end{equation}
Once we have the above inequality, it remains only to check that the collection of cubes is indeed sparse. This becomes clear once we realize that these cubes constitute in fact maximal coverings for the level sets of the maximal operators $\ic{B}^{\delta, *}, \ic{B}^{\delta, **}$ and $\ic{M}_{p_0}$, with a constant $C$ which can be chosen large enough.

The difficult task is to prove \eqref{eq:off-diag}, since we need to define a suitable maximal operator $\ic{B}^{\delta, *}$ and prove its boundedness. 
\end{proof}

Let us first introduce some elementary operators, which correspond to an $L^\infty$-normalized Littlewood-Paley decomposition for the Bochner-Riesz multiplier. We start with a positive, smooth function $\chi$, which is constantly equal to $1$ on the interval $\ds \left[ \frac{1}{2}+\frac{1}{100}, 1-\frac{1}{100}\right]$, and is supported on $\ds\left[ \frac{1}{2}, 1+\frac{1}{100}\right]$. Moreover, we require $\chi$ to satisfy
\begin{equation*}
\sum_{k \leq 0} \chi(2^{-k}x)=1 \qquad \forall x,\ |x|\leq 1.
\end{equation*}

We then consider the $L^\infty$- normalized symbols
\begin{equation*}
s_k(\xi):=2^{-k \delta} \left( 1-\lft \xi \rg^2  \right)^{\delta}_+ \cdot \chi\left( 2^{-k} \left( 1- \lft \xi \rg^2\right) \right).
\end{equation*}
We denote $S_k$ the Fourier multiplier associated to the symbol $s_k$. This yields the following decomposition for the Bochner-Riesz operator:
\begin{equation*}
\ic{B}^\delta (f)(x)=\sum_{k \leq 0} 2^{k \delta} S_k(f)(x).
\end{equation*}

\begin{definition}[The Maximal Operators]
For any $\epsilon >0$, we define
\begin{equation*}
\ic{B}^\delta_\epsilon(f)(x):=\int_{\rr{R}^n} \hat{f}(\xi) \left( 1-\lft \xi \rg^2  \right)^\delta_+ \ci\Big( \epsilon \big( 1-\lft \xi \rg^2  \big)   \Big) e^{2 \pi i x \cdot \xi} \, d\xi,
\end{equation*}
where $\ci$ is a function supported on $\ds \left[ -\frac{1}{100}, 1+\frac{1}{100}\right]$, and constantly equal to $1$ on $\left[0, 1\right]$.
If $\epsilon$ is small enough, then $\ic{B}^\delta_\epsilon$ coincides with $\ic{B}^\delta$. Then we define the ``off-diagonal" maximal operator
\begin{equation*}
\ic{B}^{\delta, *}(f)(x):= \sup_{\epsilon>0} \ \sup_{|x-y|<\epsilon}\ \left( \aver{B\left(y, \epsilon\right)} \lft \ic{B}^\delta_\epsilon \left( f \cdot \one_{B\left( x, 3 \epsilon \right)^c}  \right)(z) \rg^2 dz  \right)^{1/2}.
\end{equation*}
We will also need an auxiliary maximal operator $\ic{B}^{\delta, **}$, which is defined as
\begin{equation*}
\ic{B}^{\delta, **}(f)(x):= \sup_{\epsilon>0} \ \sup_{|x-y|<\epsilon} \ \left( \aver{B\left(y, \epsilon\right)} \lft \ic{B}^\delta_\epsilon \left( f \right)(z) \rg^2 dz  \right)^{1/2}.
\end{equation*}
 We note that, once we know that $B^{\delta, **}$ is a bounded operator on some $L^p$ space, then by Lebesgue differentiation theorem, we have that
 $$   \lft \ic{B}^{\delta}(f)(x) \rg \leq \ds \ic{B}^{\delta,**}(f)(x) \qquad  \textrm{for a.e. $x$.}$$
\end{definition}

Our definitions of the maximal operators could seem unusual, but in fact they reflect a principle that was observed by Stein, presented in \cite{Feff-BR-restr}. More exactly, using the Restriction Theorem, it can be proved that, for $p_0 \leq 2$, the Bochner-Riesz operator $\ic{B}^\delta$ maps $L^{p_0}$ into $L^2$, locally. This will appear in our proof of the $L^{p_0} \mapsto L^{p_0, \infty}$ boundedness of $\ic{B}^{\delta,**}$ and $\ic{B}^{\delta, *}$.

The following Propositions (the proofs of which are postponed to the next sections) will play a crucial role for proving Theorem \ref{thm:sparse_coll}.

\begin{proposition}
\label{prop-Heisenberg}
Let $p_0 \in(1,2)$ and consider 
$$ \rho_n(p_0) := \left\{ \begin{array}{l}
 \frac{n-1}{2}\left(\frac{1}{p_0}-\frac{1}{2}\right), \textrm{  if $ 2\frac{n+1}{n+3}\leq p_0<2$} \\
 n\left(\frac{1}{p_0}-\frac{1}{2}\right) - \frac{1}{2},\textrm{  if $ 1<p_0\leq 2\frac{n+1}{n+3}$}.
\end{array} \right.$$
Then, for any ball $B_r$ of radius $r$ and $k \leq 0$ so that $2^k r\geq 1$, we have for an arbitrarily large exponent $M$
\begin{equation}
\label{eq:S_k_H}
\left(  \aver{B_r} \left| S_k\left(f \cdot \one_{\left(2B_r\right)^c}\right)(x) \right|^{2} dx \right)^{1/2} \leq C_{\rho, M} 2^{-k\rho} \sum_{j \geq 1} 2^{-jM} \left( \aver{2^{j+1} B_r \setminus 2^j B_r} \left| f(x)  \right|^{p_0} dx \right)^{1/p_0},
\end{equation}
whenever $\rho >\rho_n\left(p_0 \right)$.
\end{proposition}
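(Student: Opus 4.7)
My plan is to balance a thickened Stein--Tomas $L^{p_0}\to L^2$ estimate against the pointwise decay of the kernel of $S_k$, using geometric-mean interpolation. With $f_j:=f\cdot\one_{A_j}$ for $A_j:=2^{j+1}B_r\setminus 2^j B_r$ and $R:=2^k r\ge 1$, Minkowski applied to $f\cdot\one_{(2B_r)^c}=\sum_{j\ge 1}f_j$ reduces the problem to proving
\begin{equation*}
X_j:=\Big(\aver{B_r}|S_k f_j|^2\,dx\Big)^{1/2}\le C_{\rho,M}\,2^{-k\rho}\,2^{-jM}\,\Big(\aver{A_j}|f|^{p_0}\Big)^{1/p_0}
\end{equation*}
for each $j\ge 1$.

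For the first estimate, the symbol $s_k$ is $L^\infty$-normalized on a spherical annulus of radial thickness $\sim 2^k$. Slicing this annulus into scaled spheres $\{|\xi|=1-s\}$ for $s\in[0,2^k]$ and applying Plancherel together with the Stein--Tomas restriction inequality slice by slice yields $\|S_k f\|_{L^2}\lesssim 2^{k/2}\|f\|_{L^{p_0}}$ in the Stein--Tomas range $p_0\le 2(n+1)/(n+3)$, and interpolation with Plancherel at $p_0=2$ gives $\|S_k f\|_{L^2}\lesssim 2^{k\alpha(p_0)}\|f\|_{L^{p_0}}$ with $\alpha(p_0)=(n+1)(1/p_0-1/2)/2$ in the remaining range. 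A direct computation shows $\alpha(p_0)-n(1/p_0-1/2)=-\rho_n(p_0)$ in both regimes, so volume-normalizing on $B_r$, using $\|f_j\|_{L^{p_0}}\sim(2^j r)^{n/p_0}(\aver{A_j}|f|^{p_0})^{1/p_0}$, and writing $r=2^{-k}R$ produces the first bound
\begin{equation*}
X_j\lesssim 2^{-k\rho_n(p_0)}\,2^{jn/p_0}\,R^{n(1/p_0-1/2)}\,\Big(\aver{A_j}|f|^{p_0}\Big)^{1/p_0}.
\end{equation*}
For the second estimate, stationary-phase analysis of $\widehat{d\sigma}$ on $S^{n-1}$ combined with repeated integrations by parts in the radial variable (exploiting the $2^k$-scale smoothness of $s_k$) gives the kernel bound $|K_k(z)|\le C_N\,2^k(1+|z|)^{-(n-1)/2}(1+2^k|z|)^{-N}$ for every $N$. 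Since $|x-y|\sim 2^j r$ and $2^k|x-y|\sim 2^j R$ uniformly for $x\in B_r$, $y\in A_j$, inserting this into $|S_k f_j(x)|\le\int|K_k(x-y)||f_j(y)|\,dy$ and using H\"older yields the second bound
\begin{equation*}
X_j\le C_N\,2^{-k(n-1)/2}\,(2^j R)^{(n+1)/2-N}\,\Big(\aver{A_j}|f|^{p_0}\Big)^{1/p_0}.
\end{equation*}

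The conclusion comes from the geometric-mean interpolation $X_j=X_j^\theta X_j^{1-\theta}\le(\text{first})^\theta(\text{second})^{1-\theta}$ for $\theta\in(0,1)$, which produces the $k$-exponent $-[\theta\rho_n(p_0)+(1-\theta)(n-1)/2]$. Since $\rho_n(p_0)<(n-1)/2$ for every $p_0>1$, any $\rho\in(\rho_n(p_0),(n-1)/2]$ is realized by a unique $\theta\in[0,1)$, with $\theta\to 1^-$ as $\rho\to\rho_n(p_0)^+$; values $\rho>(n-1)/2$ follow a fortiori since $2^{-k(n-1)/2}\le 2^{-k\rho}$ when $k\le 0$. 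Taking $N$ large enough (depending on $\theta$, $p_0$, and $M$) makes the residual $2^j$- and $R$-exponents negative and of size at least $M$; the $R$-factor can then be discarded thanks to $R\ge 1$, and the required bound on $X_j$ drops out. \textbf{The main obstacle} is that neither estimate alone is sufficient: the first has the optimal $k$-exponent but unwanted polynomial growth in $R$ and $j$, while the second has rapid off-diagonal decay but the suboptimal $k$-exponent $(n-1)/2$. The Heisenberg hypothesis $R=2^k r\ge 1$ is precisely what permits trading the $R$-growth of the first estimate against the $R$-decay of the second, and the blow-up of $C_{\rho,M}$ as $\rho\to\rho_n(p_0)^+$ reflects $\theta\to 1$ and hence $N\to\infty$ in the interpolation.
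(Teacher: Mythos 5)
Your proof is correct, and the overall architecture matches the paper's: decompose into annuli $A_j=2^{j+1}B_r\setminus 2^jB_r$, prove a restriction-based $L^{p_0}\to L^2$ estimate carrying the critical exponent $\rho_n(p_0)$, prove a kernel-decay estimate with arbitrarily fast off-diagonal decay, and take a geometric mean with $\theta\to 1^-$ as $\rho\to\rho_n(p_0)^+$. The key input, however, is genuinely different. The paper's first estimate comes from Tao's localized weak-type endpoint theorem ($\int_{\{\ic{M}_pf\le\lambda\}}|S_kf|^2\le C2^{-2k\delta_n(p)}\lambda^{2-p}\|f\|_p^p$), applied with $\lambda=(\aver{2^{j+1}B_r}|f_j|^p)^{1/p}$; the structure $\lambda^{2-p}\|f_j\|_p^p=\lambda^2|2^{j+1}B_r|$ cancels the volume factors exactly, so the paper's restriction-based bound already has the form of a localized average inequality with no residual dependence on $r$, and the kernel decay is needed only to convert the $2^{jn/2}$ growth into $2^{-jM}$ decay. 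You instead use the elementary global bound $\|S_k\|_{L^{p_0}\to L^2}\lesssim 2^{k\alpha(p_0)}$ obtained by slicing the annulus and applying Stein--Tomas sphere by sphere (this is exactly the computation the paper performs in the proof of Proposition \ref{non-Heisenberg} for the diagonal term), which leaves the extra factor $R^{n(1/p_0-1/2)}$ with $R=2^kr\ge 1$; your identity $\alpha(p_0)-n(1/p_0-1/2)=-\rho_n(p_0)$ makes this bookkeeping transparent, and the hypothesis $2^kr\ge 1$ is then used a second time to absorb the $R$-growth into the $R$-decay of the kernel estimate. What your route buys is self-containedness: you avoid citing the localized weak-type result and need only the classical Stein--Tomas theorem plus standard stationary-phase kernel bounds (indeed even the cruder bound $|\check s_k(z)|\lesssim 2^k(1+2^k|z|)^{-N}$, without the $(1+|z|)^{-(n-1)/2}$ gain, would suffice, since one only needs the second estimate's $k$-exponent to be some finite constant exceeding $\rho_n(p_0)$). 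What the paper's route buys is a cleaner separation of roles: restriction handles the $k$-exponent and the volume normalization simultaneously, and the interpolation weight on the kernel bound can be taken arbitrarily small rather than tending to a fixed positive value determined by $\rho$.
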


\begin{proposition}
\label{non-Heisenberg}
Let $p_0 \in (1,2)$ and $\ds \rho_n\left(p_0\right)$ be as in Proposition \ref{prop-Heisenberg}. Then, for any ball of radius $\epsilon \geq 1$ and $k \leq 0$ so that $2^k \epsilon \leq 1$, and any $\ds \rho >\rho_n\left(p_0\right)$, we have
\begin{equation*}
\left( \aver{B(x, 2\epsilon)} \left| S_k\left( f \cdot \one_{B(x, 3 \epsilon)}  \right) \rg^2 dz  \right)^{1/2} \lesssim 2^{-k \rho} \left(  \aver{B(x, 3\epsilon)} \lft f \rg^{p_0} dz \right)^{1/{p_0}}.
\end{equation*}
\end{proposition}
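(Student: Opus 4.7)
My plan is to derive Proposition \ref{non-Heisenberg} from the Tomas--Stein restriction inequality applied to the $2^k$-neighbourhood of the unit sphere, combined with the scale constraint $\epsilon \le 2^{-k}$.

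The main step is a global Tomas--Stein-type bound on the piece $S_k$ of the Bochner--Riesz multiplier:
\begin{equation*}
\|S_k g\|_{L^2(\mathbb{R}^n)} \;\le\; C\, 2^{k \alpha(p_0)}\, \|g\|_{L^{p_0}(\mathbb{R}^n)}, \qquad \alpha(p_0) := \min\!\Bigl\{\tfrac{1}{2},\ \tfrac{n+1}{2}\bigl(\tfrac{1}{p_0} - \tfrac{1}{2}\bigr)\Bigr\}.
\end{equation*}
At the Tomas--Stein endpoint $p_0^* = 2(n+1)/(n+3)$ this is the standard Plancherel-and-slicing argument: since $s_k$ is $L^\infty$-normalised and supported in the annulus $A_k = \{1-|\xi|^2 \sim 2^k\}$ of thickness $\sim 2^k$, Plancherel gives
\begin{equation*}
\|S_k g\|_{L^2}^2 \;\lesssim\; \int_{A_k} |\widehat g(\xi)|^2\,d\xi \;\sim\; \int_{r\in I_k} \!\!\int_{|\xi|=r} |\widehat g|^2\, d\sigma_r(\xi)\,dr,
\end{equation*}
where $I_k \subset [0,1]$ has length $\sim 2^k$; the inner integral is controlled by $\|g\|_{L^{p_0^*}}^2$ uniformly in $r$ close to $1$ by Tomas--Stein, and integrating in $r$ contributes the factor $2^k$. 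Complex interpolation with the trivial bound $\|S_k\|_{L^2 \to L^2} \lesssim \|s_k\|_\infty \lesssim 1$ then yields the estimate for all $p_0 \in (1,2)$ with the claimed $\alpha(p_0)$.

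The remainder is bookkeeping. Applying the displayed bound to $g = f \cdot \one_{B(x,3\epsilon)}$, restricting the $L^2$ norm on the left to $B(x,2\epsilon)$, and converting norms into averages produces
\begin{equation*}
\left(\aver{B(x,2\epsilon)}|S_k g|^2\right)^{1/2} \;\lesssim\; 2^{k\alpha(p_0)}\, \epsilon^{n(1/p_0 - 1/2)}\, \left(\aver{B(x,3\epsilon)}|f|^{p_0}\right)^{1/p_0}.
\end{equation*}
Since $p_0 < 2$ the exponent $n(1/p_0-1/2)$ is non-negative, and the hypothesis $\epsilon \le 2^{-k}$ gives $\epsilon^{n(1/p_0-1/2)} \le 2^{-kn(1/p_0-1/2)}$; combining exponents, the prefactor collapses to $2^{-k \rho_n(p_0)}$ after invoking the identity $n(1/p_0 - 1/2) - \alpha(p_0) = \rho_n(p_0)$, which is immediate in each of the two regimes defining $\rho_n(p_0)$. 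Finally, because $k \le 0$ and $\rho \ge \rho_n(p_0)$, one has $2^{-k\rho_n(p_0)} \le 2^{-k\rho}$, concluding the proof.

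The only delicate point is ensuring that the Tomas--Stein constant on the sphere of radius $r \in I_k$ is uniform as $r \to 1^-$, which follows by rescaling from the unit sphere; apart from this the argument is a bookkeeping of exponents, and one in fact obtains the endpoint case $\rho = \rho_n(p_0)$ as a by-product.
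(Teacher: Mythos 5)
Your proof is correct and follows essentially the same route as the paper's: Plancherel plus slicing of the annulus into spheres, the Tomas--Stein restriction inequality on each sphere, interpolation with the trivial $L^2\to L^2$ bound, and conversion to averages using $2^k\epsilon\le 1$; your exponent identity $n(1/p_0-1/2)-\alpha(p_0)=\rho_n(p_0)$ checks out in both regimes and even yields the endpoint $\rho=\rho_n(p_0)$. The only (harmless) imprecision is attributing the case $p_0<2(n+1)/(n+3)$ to interpolation from the endpoint, whereas there one should simply run the slicing argument with $R(p_0\to 2)$ directly, which still gives $\alpha(p_0)=1/2$.
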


\begin{proposition}
\label{propo:maxBR}
Then for every $\delta>\bar{\delta}_n(p_0)$ (given by \eqref{eq:delta_n-bar}) the maximal operators $\ic{B}^{\delta,*}$ and $\ic{B}^{\delta,**}$ are of weak-type $(p_0,p_0)$.
\end{proposition}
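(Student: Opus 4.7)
The plan is to bound both maximal operators pointwise by a Hardy--Littlewood type maximal function and then invoke its weak-$(p_0,p_0)$ boundedness. Fix $x$, $\epsilon>0$ and $y$ with $|x-y|<\epsilon$, and insert the $L^\infty$-normalized Littlewood--Paley decomposition $\ic{B}^\delta_\epsilon = \sum_{k\leq 0} 2^{k\delta} S_k^\epsilon$ (the cutoff $\ci(\epsilon(1-|\xi|^2))$ only truncates the sum near $k\sim -\log_2\epsilon$, which is already captured by the uncertainty split below). Splitting the input as $f=f_1+f_2$ with $f_1:=f\cdot\one_{B(x,3\epsilon)}$, it suffices to control each quantity
\[
\mathcal{A}_k(h) := \Bigl(\aver{B(y,\epsilon)}|S_k^\epsilon h(z)|^2\,dz\Bigr)^{1/2}
\]
on the appropriate pieces ($h=f_2$ for $\ic{B}^{\delta,*}$, and both $h=f_1,f_2$ for $\ic{B}^{\delta,**}$), and then to sum $\sum_{k\leq 0}2^{k\delta}\mathcal{A}_k$ geometrically in $k$.

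The analysis of $\mathcal{A}_k$ naturally splits along the uncertainty threshold $2^k\epsilon$ versus $1$. In the \emph{Heisenberg} regime $2^k\epsilon\geq 1$, Proposition~\ref{prop-Heisenberg} applied with $B_r = B(y,\epsilon)$ gives $\mathcal{A}_k(f_2)\lesssim 2^{-k\rho}\ic{M}_{p_0}(f)(x)$ for any $\rho>\rho_n(p_0)$: the inclusion $B(x,3\epsilon)\supseteq 2B(y,\epsilon)$ verifies the off-diagonal support hypothesis, and the Schwartz tails in the index $j$ sum to a single Hardy--Littlewood maximal value. In the \emph{anti-Heisenberg} regime $2^k\epsilon\leq 1$, Proposition~\ref{non-Heisenberg} plays the symmetric role for $\mathcal{A}_k(f_1)$. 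The two remaining mixed pairings ($f_1$ in the Heisenberg regime for $\ic{B}^{\delta,**}$, and $f_2$ in the anti-Heisenberg regime for both operators) are where the auxiliary exponent $\tilde\delta(p_1)$ enters: after the $L^\infty$ normalization, the $L^{p_1}\to L^{p_1}$ boundedness of $\ic{B}^{\tilde\delta(p_1)}$ translates to the uniform bound $\|S_k\|_{L^{p_1}\to L^{p_1}}\lesssim 2^{-k\tilde\delta(p_1)}$, and coupling this with Stein's observation that the Stein--Tomas restriction theorem yields a \emph{local} $L^{p_0}\to L^2$ improvement for $\ic{B}^\delta$ (as alluded to just before the Propositions), one obtains $\mathcal{A}_k\lesssim 2^{-k(\tilde\delta(p_1)+\rho_n(p_0))}\ic{M}_{p_0}(f)(x)$ in these mixed regimes.

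Summing $2^{k\delta}\mathcal{A}_k$ over $k\leq 0$ then produces a convergent geometric series in $2^{k(\delta-\bar\delta_n(p_0))}$, since $\delta>\bar\delta_n(p_0)=\tilde\delta(p_1)+\rho_n(p_0)$. Taking the supremum over $\epsilon$ and $y$ yields the pointwise domination $\ic{B}^{\delta,*}(f)(x)+\ic{B}^{\delta,**}(f)(x)\lesssim \ic{M}_{p_0}(f)(x)$, and the weak-$(p_0,p_0)$ conclusion follows at once from the weak-$(p_0,p_0)$ bound for $\ic{M}_{p_0}$. The main obstacle is the mixed-regime estimate: the kernel spatial scale $2^{-k}$ and the ball radius $\epsilon$ are mismatched on the ``wrong'' side of the support split, so neither Proposition~\ref{prop-Heisenberg} nor \ref{non-Heisenberg} is directly applicable. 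One must therefore carefully pair the $L^{p_1}$-boundedness of $S_k$ with the local $L^{p_0}\to L^2$ restriction gain so that the total loss is exactly $2^{-k(\tilde\delta(p_1)+\rho_n(p_0))}$ and nothing worse --- this precise matching of exponents is what forces the threshold in the statement to be $\bar\delta_n(p_0) = \tilde\delta(p_1)+\rho_n(p_0)$, and thus to depend on the (partially conjectural) Bochner--Riesz boundedness at the auxiliary exponent $p_1$.
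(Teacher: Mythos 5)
There is a genuine gap, and it sits at the center of your argument: the pointwise bound $\ic{B}^{\delta,**}(f)(x)\lesssim \ic{M}_{p_0}(f)(x)$ that you aim for cannot hold in the range of $\delta$ under consideration. Since $\ic{B}^\delta_\epsilon=\ic{B}^\delta$ for small $\epsilon$, Lebesgue differentiation gives $|\ic{B}^\delta f|\leq \ic{B}^{\delta,**}f$ a.e., so your bound would imply that $\ic{B}^\delta$ is bounded on $L^p$ for \emph{every} $p>p_0$; for example in $\rr{R}^2$ with $p_0=6/5$ and $\delta$ slightly above $1/6$ this contradicts Herz's necessary condition \eqref{def-cond-delta} as soon as $p>6$. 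This impossibility is precisely why the introduction stresses that pointwise domination is out of the question for Bochner--Riesz, and it is why the ``mixed regime'' estimate you assert ($\mathcal{A}_k\lesssim 2^{-k(\tilde\delta(p_1)+\rho_n(p_0))}\ic{M}_{p_0}(f)(x)$) cannot be proved: that step is the whole difficulty, and you give no argument for it.

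The paper's actual route is structurally different in two ways. First, the Heisenberg regime $2^k\epsilon\geq 1$ never occurs inside the maximal operators: the cutoff $\ci(\epsilon(1-|\xi|^2))$ in the definition of $\ic{B}^\delta_\epsilon$ restricts the Littlewood--Paley sum to $2^k\epsilon\lesssim 1$, so Proposition \ref{prop-Heisenberg} is not used in this proof at all (it is used in the off-diagonal estimate \eqref{eq:off-diag}); moreover $\ic{B}^{\delta,*}$ is reduced to $\ic{B}^{\delta,**}$ by writing $f\one_{B(x,3\epsilon)^c}=f-f\one_{B(x,3\epsilon)}$ and controlling the local piece pointwise by $\ic{M}_{p_0}f(x)$ via Proposition \ref{non-Heisenberg}. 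Second, $\ic{B}^{\delta,**}$ is handled by a \emph{norm} estimate, not a pointwise one: the kernel decay $|\check{s}_k(z)|\lesssim 2^k(1+|z|)^{-(n-1)/2}(1+2^k|z|)^{-N}$ gives $\|S_kf\|_{L^\infty(B(y,\epsilon))}\lesssim 2^{-k(\frac{n-1}{2})^+}\ic{M}f(x)$, which is interpolated against $\ic{M}_{p_1}[S_kf](x)$; H\"older in weak $L^p$ together with the uniform bound $\|S_k\|_{L^{p_1}\to L^{p_1}}\lesssim 2^{-k\tilde\delta(p_1)^+}$ then yields $\|\ic{B}^{\delta,**}f\|_{L^{p_0,\infty}}\lesssim\|f\|_{L^1}^\theta\|f\|_{L^{p_1}}^{1-\theta}$ for $\delta>\theta\frac{n-1}{2}+\tilde\delta(p_1)=\bar\delta_n(p_0)$, and restricted weak-type interpolation converts this into the weak $(p_0,p_0)$ bound. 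You would need to replace your pointwise scheme by an argument of this norm-interpolation type to close the proof.
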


Now we are ready to show how the above propositions imply inequality \eqref{eq:off-diag}, which further completes the proof of Theorem \ref{thm:sparse_coll}. 

\begin{proof}[Proof of Theorem \ref{thm:sparse_coll}]
We have already given the sketch of the proof, it remains to check the estimate for the part associated to the integral over $Q_0\setminus E$ and the inequality \eqref{eq:off-diag}, which corresponds to the integral over each $Q_j$. \\

To begin with, is not difficult to check that
\begin{equation}
\label{eq:outside-exc}
\lft \int_{Q_0 \setminus E} \ic{B}^\delta (f)(x) \bar{g}(x) dx \rg \lesssim \left( \aver{6Q_0} \lft f \rg^{p_0} dx \right)^{1/{p_0}} \cdot \left( \aver{6Q_0} \lft g \rg^{2} dx \right)^{1/{2}} \cdot \lft Q_0 \rg.
\end{equation}
Indeed, we have already argued that $\ds \lft\ic{B}^{\delta}(f)(x) \rg \leq \ic{B}^{\delta,**}(f)(x)$ for a.e. $x$, and hence
\begin{align}
\label{eq:outside-exc-ptws}
&\lft \int_{Q_0 \setminus E} \ic{B}^\delta (f)(x) \bar{g}(x) dx \rg \lesssim \left(\sup_{ Q_0 \setminus E} \ic{B}^\delta (f)\right) \int_{Q_0} \lft g(x) \rg\, dx\\
& \lesssim \left(  \aver{6Q_0} \lft f \rg^{p_0}dx \right)^{1/{p_0}} \cdot \aver{6Q_0} \lft g(x) \rg dx \cdot \lft Q_0 \rg. \nonumber
\end{align}
Then H\"{o}lder's inequality implies \eqref{eq:outside-exc}.

It remains to prove  \eqref{eq:off-diag}.  Let $Q_j$ be a maximal dyadic cube as those appearing in \eqref{eq:BR-split}, and let $r$ denote its diameter. We further split $\ds \ic{B}^\delta(f \cdot \one_{\left(6 Q_j\right)^c})$ as 
\begin{equation*}
\ic{B}^\delta(f \cdot \one_{\left(6 Q_j\right)^c})(x)=\ic{B}_{2r}^\delta(f \cdot \one_{\left(6 Q_j\right)^c})+ \left(  \ic{B}^\delta(f \cdot \one_{\left(6 Q_j \right)^c})-\ic{B}_{2r}^\delta(f \cdot \one_{\left(6 Q_j\right)^c}) \right) .
\end{equation*}
To deal with the first term, note that by definition, for every $x\in Q_j$
$$\ds \lft \ic{B}_{2r}^\delta(f \cdot \one_{\left( 6Q_j \right)^c})(x) \rg \leq \inf_{y\in \widetilde{Q_j}} \ic{B}^{\delta,*}(f)(y) \lesssim C \left(  \aver{6Q_0} \lft f \rg^{p_0}dx \right)^{1/{p_0}}$$
since $Q_j$ is maximal and so $\widetilde{Q_j}$ (its parent) meets the complimentary set $E^c$. Actually, the definition of $\ic{B}^{\delta, *}$ is motivated by this inequality. Then we have an estimate similar to the one in \eqref{eq:outside-exc-ptws}:
\begin{align}
\label{eq:Q_j-estB_r}
\lft \int_{Q_j} \ic{B}^\delta (f \cdot \one_{\left(6 Q_j\right)^c})(x) \bar{g}(x) dx \rg & \lesssim \left(  \aver{6Q_0} \lft f \rg^{p_0}\, dx \right)^{1/{p_0}} \cdot \int_{Q_j} \lft g(x) \rg \, dx. \nonumber
\end{align}
Since the cubes $Q_j$ are disjoint and they are all contained inside $Q_0$ (because of the dyadic structure), we obtain that 
\[
\sum_{j} \int_{Q_j} \lft g(x) \rg dx \lesssim \int_{Q_0} \lft g(x)\rg dx \lesssim \left(  \aver{6 Q_0} \lft  g(x) \rg^2 dx \right)^{1/2} \cdot \lft  Q_0\rg,
\]
which yields
$$ \sum_{j} \lft \int_{Q_j} \ic{B}^\delta (f \cdot \one_{\left(6 Q_j\right)^c})(x) \bar{g}(x) \, dx \rg \lesssim \left(  \aver{6 Q_0} \lft f \rg^{p_0}\, dx \right)^{1/{p_0}} \left(  \aver{6 Q_0} \lft  g(x) \rg^2 dx \, \right)^{1/2} \cdot \lft  Q_0\rg,$$
as desired.

We are left with estimating $\ic{B}^\delta(f \cdot \one_{\left(6 Q_j \right)^c})-\ic{B}_{2r}^\delta(f \cdot \one_{\left(6 Q_j\right)^c})$, and in doing this, we use the operators $S_k$. More exactly, we note that
\begin{equation*}
\ic{B}^\delta(f \cdot \one_{\left(6 Q_j \right)^c})(x)-\ic{B}_{2r}^\delta(f \cdot \one_{\left(6 Q_j\right)^c})(x)=\sum_{2r \geq 2^{k+1}r \geq 1} 2^{k\delta} S_k\left( f \cdot \one_{\left( 6Q_j \right)^c}  \right)(x).
\end{equation*}
This implies that 
\begin{align*}
&\lft \int_{Q_j}\ic{B}^\delta(f \cdot \one_{\left(6 Q_j \right)^c})(x)-\ic{B}_{2r}^\delta(f \cdot \one_{\left(6 Q_j\right)^c})(x) \cdot \bar{g}(x) \, dx  \rg \\
&\lesssim \sum_{2r \geq 2^{k+1}r \geq 1} 2^{k \delta} \left( \frac{1}{\lft Q_j \rg} \int_{Q_j} \lft S_k\left( f \cdot \one_{\left( 6Q_j \right)^c}  \right)(x)\rg^2 dx \right)^{1/2} \cdot \left( \frac{1}{\lft Q_j \rg} \int_{Q_j} \lft g(x) \rg^2 dx  \right)^{1/2} \cdot \lft Q_j \rg.
\end{align*}
Proposition \ref{prop-Heisenberg} implies that the above expression can be bounded by
\begin{align*}
&\sum_{2r \geq 2^{k+1}r \geq 1} 2^{k\left( \delta -\rho\right)} \inf_{x \in 6Q_j} \ic{M}_{p_0}\left[f\right](x) \cdot \left( \frac{1}{\lft Q_j \rg} \int_{Q_j} \lft g(x) \rg^2 \, dx  \right)^{1/2} \cdot \lft Q_j \rg\\
&\lesssim \sum_{2r \geq 2^{k+1}r \geq 1} 2^{k\left( \delta -\rho\right)}  \left( \aver{6Q_0} \lft f \rg^{p_0} \, dx  \right)^{1/{p_0}} \cdot \left( \frac{1}{\lft Q_j \rg} \int_{Q_j} \lft g(x) \rg^2 \, dx  \right)^{1/2} \cdot \lft Q_j \rg,
\end{align*}
because $6Q_j$ meets the complementary $E^c$ (so the maximal function is bounded at some points of $6Q_j$).
Now we only need to notice that 
\[
\sum_{Q_j} \left( \frac{1}{\lft Q_j \rg} \int_{Q_j} \lft g(x) \rg^2 dx  \right)^{1/2} \cdot \lft Q_j \rg \lesssim \left( \frac{1}{\lft Q_0 \rg} \int_{Q_0} \lft g(x) \rg^2 dx  \right)^{1/2} \cdot \lft Q_0 \rg,
\]
which is a consequence of the disjointness of the cubes $Q_j$ and of the Cauchy-Schwartz inequality.

This completes the verification of \eqref{eq:off-diag}, which allows us to prove then Theorem \ref{thm:sparse_coll} by iteration, as explained before in the sketch.
\end{proof}

Consequently, we are left we proving Propositions \ref{prop-Heisenberg},  \ref{non-Heisenberg} and \ref{propo:maxBR}, and we will do this in the following sections.

\section{Localized estimates for the elementary operators $S_k$ (Propositions \ref{prop-Heisenberg} and \ref{non-Heisenberg}}\label{sec:prop}

The operators $S_k$ can be studied using oscillatory integrals, maximal Kakeya operators, or the Restriction Theorem. In our analysis of $\ic{B}^\delta$, we choose the latter strategy.

The $R\left( p \to q \right)$ Restriction Conjecture consists in obtaining the full range of exponents $(p,q) \in [1,\infty]^2$ such that
\begin{conjecture}
\begin{equation}
\label{eq:restriction_conj}
\tag{$R\left( p \to q \right)$} \left\| \hat{f} \vert_{{\mathbb S}^{n-1}}\right\|_{L^q\left( {\mathbb S}^{n-1} \right)} \leq C \left\| f \right\|_{L^p\left( \rr{R}^n \right)}
\end{equation}
holds for every smooth function $f \in \ic{S}\left(\rr{R}^n\right)$. 
\end{conjecture}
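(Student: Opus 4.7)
The plan is to attack the restriction conjecture $R(p\to q)$ through the familiar three-step program---duality, the $TT^{\ast}$ trick at $q=2$, then interpolation---while being realistic that the full conjecture is notoriously open in dimension $n\geq 3$. The target I can actually hope to reach is the Stein--Tomas range, which is precisely the ingredient that Propositions \ref{prop-Heisenberg} and \ref{non-Heisenberg} will need, and the obstruction beyond it has to be flagged openly.

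First, by duality, $R(p\to q)$ is equivalent to the extension estimate
\[
\|\calE g\|_{L^{p'}(\rr{R}^n)} \lesssim \|g\|_{L^{q'}(\mathbb{S}^{n-1})}, \qquad \calE g(x) := \int_{\mathbb{S}^{n-1}} g(\omega)\, e^{2\pi i x\cdot\omega}\, d\sigma(\omega).
\]
At the distinguished endpoint $q=2$ the $TT^{\ast}$ identity recasts the square of the extension norm as the convolution estimate $\|f\ast \widehat{d\sigma}\|_{L^{p'}} \lesssim \|f\|_{L^{p}}$. Stationary phase on $\mathbb{S}^{n-1}$, using its everywhere nonvanishing Gaussian curvature, yields the decay $|\widehat{d\sigma}(x)| \lesssim (1+|x|)^{-(n-1)/2}$; a dyadic decomposition in $|x|$ together with Stein's complex interpolation along the analytic family $(1-|\xi|^{2})_{+}^{z}$ then produces the Stein--Tomas endpoint at $p = 2(n+1)/(n+3)$, $q=2$. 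Interpolating this endpoint with the trivial $L^{1}(\mathbb{S}^{n-1})\to L^{\infty}(\rr{R}^n)$ bound covers the remainder of the $L^{2}$-based half of the conjecture.

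It is worth pointing out that the threshold $2(n+1)/(n+3)$ reappearing in the piecewise definition of $\rho_n(p_0)$ in Proposition \ref{prop-Heisenberg} is exactly this Stein--Tomas endpoint: above it one keeps the full curvature gain $(n-1)/2$, while below it only the weaker Hausdorff--Young-type scaling $n(1/p_{0}-1/2)-1/2$ survives. The \emph{main obstacle} is everything past Stein--Tomas: to reach the full conjectured range $p < 2n/(n+1)$ one must abandon pure $L^{2}$ orthogonality and invoke bilinear restriction (Tao--Vargas--Vega, Wolff, Tao) or the polynomial method (Guth). In dimension $n=2$ the full conjecture coincides with Carleson--Sj\"olin and is a theorem, which is what the paper leverages in its two-dimensional application; in dimension $n\geq 3$ the conjecture is open, so any improvement in $\tilde{\delta}(p_1)$---and thus in $\bar{\delta}_{n}(p_{0})$---has to be imported from the partial progress of Bourgain, Bourgain--Guth, and Guth. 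For this reason my proof proposal honestly terminates at Stein--Tomas plus a citation, with the understanding that the conjecture is used downstream as a black box and only as far as it is currently proved.
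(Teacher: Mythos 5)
The statement you were asked to prove is an open conjecture, and the paper accordingly offers no proof of it: it only invokes the $q=2$ case on the Tomas--Stein range $1\leq p\leq \frac{2n+2}{n+3}$, quoted as Theorem~\ref{thm:Tomas-Stein} with a citation, which is all that Propositions~\ref{prop-Heisenberg} and~\ref{non-Heisenberg} require. Your proposal does exactly the same thing --- a correct sketch of the standard $TT^{\ast}$/stationary-phase/complex-interpolation proof of the Stein--Tomas endpoint, an honest acknowledgement that the range beyond it is open outside $n=2$, and the observation that the threshold $\frac{2(n+1)}{n+3}$ is precisely what drives the piecewise form of $\rho_n(p_0)$ --- so it matches the paper's treatment.
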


While the full conjecture is still open, the particular case $q=2$ was completely solved by Tomas and Stein \cite{Tomas:tomas-Stein}:
\begin{thm}
\label{thm:Tomas-Stein}
For any $n \geq 2$, the restriction inequality $R\left( p \to 2 \right)$ holds for any $1 \leq p \leq \frac{2n+2}{n+3}$. Moreover, this bound fails for any $p>\frac{2n+2}{n+3}$.
\end{thm}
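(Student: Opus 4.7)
The plan is to prove Tomas--Stein by the classical three-step scheme: duality plus $TT^{*}$, analytic interpolation, and a Knapp-example lower bound. Write $d\sigma$ for the surface measure on $\mathbb{S}^{n-1}$. By duality, the restriction inequality $R(p\to 2)$ is equivalent to the extension estimate
\[
\bigl\| (g\,d\sigma)^{\vee} \bigr\|_{L^{p'}(\rr{R}^{n})} \leq C\,\|g\|_{L^{2}(\mathbb{S}^{n-1})},
\]
and the $TT^{*}$ identity reduces this further to the convolution inequality
\[
\bigl\| f \ast \widehat{d\sigma} \bigr\|_{L^{p'}(\rr{R}^{n})} \leq C\,\|f\|_{L^{p}(\rr{R}^{n})} \qquad \forall f\in\ic{S}(\rr{R}^{n}).
\]
The central analytic input is the curvature-driven decay $|\widehat{d\sigma}(\xi)|\lesssim (1+|\xi|)^{-(n-1)/2}$, obtained from stationary phase applied to a local parametrization of the sphere, the full exponent reflecting the non-vanishing of all $n-1$ principal curvatures.

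To establish the convolution estimate at the endpoint $p = \tfrac{2n+2}{n+3}$, I would apply Stein's analytic interpolation theorem to a suitable analytic family $\{T_{z}\}$ of Bochner--Riesz type convolution operators, parametrized so that one vertical line of the strip corresponds to $L^{2}\to L^{2}$ boundedness (there the Fourier multiplier is in $L^{\infty}$, with admissible growth in $|\mathrm{Im}(z)|$ enforced by a gaussian factor $e^{z^{2}}$), and the other vertical line corresponds to $L^{1}\to L^{\infty}$ boundedness (there analytic continuation identifies the kernel, up to constants, with $\widehat{d\sigma}$, to which the decay bound above applies). Interpolating along the appropriate intermediate vertical line yields $L^{p}\to L^{p'}$ boundedness precisely at $p = \tfrac{2n+2}{n+3}$; monotonicity in $p$ then extends the conclusion to the full range $1 \leq p \leq \tfrac{2n+2}{n+3}$.

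For the sharpness of the upper bound on $p$, I would use the Knapp example: apply the extension inequality to $g = \one_{C_{\delta}}$, the indicator of a spherical cap of diameter $\delta\ll 1$. Since $|C_{\delta}|\sim \delta^{n-1}$, a direct stationary-phase computation gives $|(g\,d\sigma)^{\vee}|\sim\delta^{n-1}$ on a dual Knapp tube of length $\sim\delta^{-2}$ (radial to the cap) and transverse width $\sim\delta^{-1}$ in each of the $n-1$ tangential directions. Comparing $L^{p'}$ and $L^{2}(d\sigma)$ norms as $\delta\to 0^{+}$ forces $(n-1)p'/2 \geq n+1$, hence $p\leq \tfrac{2n+2}{n+3}$.

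The main obstacle is handling the endpoint itself without loss. A naive dyadic decomposition $\widehat{d\sigma} = \sum_{k}\psi_{k}$ with $\psi_{k}$ frequency-localized to $\{|\xi|\sim 2^{k}\}$, followed by Riesz--Thorin on each piece using $\|\psi_{k}\|_{\infty}\lesssim 2^{-k(n-1)/2}$, closes the convolution inequality strictly below $p = \tfrac{2n+2}{n+3}$ but degenerates to a logarithmic loss at the critical exponent, because the per-scale operator norms only achieve $O(1)$ decay there. Stein's analytic interpolation circumvents this loss precisely by bundling the scale summation into a complex parameter with gaussian-controlled vertical growth, producing a strong-type bound directly at the endpoint.
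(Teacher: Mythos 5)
This statement is Theorem~\ref{thm:Tomas-Stein}, which the paper quotes as a known background result with a citation to Tomas and Stein and does not prove; there is therefore no in-paper argument to compare against. Your outline is the standard classical proof, correctly assembled: duality and $TT^{*}$ reduce $R(p\to 2)$ to the convolution inequality for $\widehat{d\sigma}$, the stationary-phase decay $(1+|\xi|)^{-(n-1)/2}$ is the right analytic input, Stein's complex interpolation with a Bochner--Riesz-type analytic family handles the endpoint $p=\tfrac{2n+2}{n+3}$ where the naive dyadic scale decomposition loses a logarithm, and the Knapp cap computation (cap of diameter $\delta$, dual tube of dimensions $\delta^{-2}\times(\delta^{-1})^{n-1}$, leading to $p'\geq \tfrac{2(n+1)}{n-1}$) gives the claimed sharpness. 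Two small points of hygiene: the lower bound $|(g\,d\sigma)^{\vee}|\gtrsim\delta^{n-1}$ on the tube comes from non-oscillation of the phase there rather than from stationary phase; and the extension from the endpoint to all $1\leq p\leq\tfrac{2n+2}{n+3}$ is not literal monotonicity in $p$ (the $L^{p}(\rr{R}^{n})$ scales are not nested) but follows by interpolating the endpoint bound with the trivial estimate $\|\hat f\|_{L^{2}(\mathbb{S}^{n-1})}\lesssim\|\hat f\|_{L^{\infty}}\leq\|f\|_{L^{1}}$, which holds since the sphere has finite measure. With those adjustments the sketch is a faithful account of the argument the paper invokes by reference.
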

That Theorem \ref{thm:Tomas-Stein} implies, in some cases, the boundedness of the Bochner-Riesz operator was known since \cite{Feff-BR-restr} and \cite{Christ-weightedBR-aeConv}. A deeper connection between the two problems was revealed in \cite{Tao-BR-impliesRestr}.

In this section, we will be using a result from \cite{Tao_weak-bounds}, where weak type-estimates for the critical $\delta$ are studied:
\begin{thm}[Theorem 1.1 of \cite{Tao_weak-bounds}]
\label{thm:tao-weak-endpoint}
For any $\ds p \leq \frac{2n+2}{n+3}$, the Bochner-Riesz operator $\ic{B}^{\delta_n\left( p\right)}$ satisfies the following:
\begin{equation*}
\int_{\lbrace \ic{M}_pf(x) \leq \lambda \rbrace}\left| \ic{B}^{\delta_n\left(p\right)} f(x) \right|^2 dx \leq C \lambda^{2-p} \left\|f\right\|_p^p,
\end{equation*}
where $\delta_n(p)$ is the critical index given by $\ds \delta_n(p):=n \left( \frac{1}{p}-\frac{1}{2} \right)-\frac{1}{2}$. Moreover, for any $k \leq 0$, we have
\begin{equation}
\label{eq:tao-S_k}
\int_{\lbrace \ic{M}_pf(x) \leq \lambda \rbrace}\left| S_k f(x) \right|^2 dx \leq C  2^{- 2k \delta_n(p)}\lambda^{2-p} \left\|f\right\|_p^p.
\end{equation}
\end{thm}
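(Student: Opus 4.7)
The plan is to prove \eqref{eq:tao-S_k} — the quantitative per-scale statement — and then recover the first inequality from it by combining the pieces $S_k$ using the near-orthogonality of their frequency supports. The two driving tools are Theorem \ref{thm:Tomas-Stein} (Tomas–Stein restriction) and a Calder\'on–Zygmund decomposition of $f$ adapted to the $L^{p}$-maximal function $\ic M_p$ at the level $\lambda$.

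Concretely, I would first extract from \eqref{eq:restriction_conj} with $q=2$ a global $L^{p}\to L^{2}$ estimate for $S_k$ at the critical exponent $\delta_n(p)$, obtained by applying the extension (dual restriction) inequality slice-by-slice across the $2^{k}$-thick spherical shell supporting $s_k$ and then integrating radially via Plancherel — the $L^{\infty}$-normalisation of $s_k$ together with the thickness gain is what produces the scale $2^{-k\delta_n(p)}$ that appears in \eqref{eq:tao-S_k}. Next, for a fixed $\lambda$, I would perform a Calder\'on–Zygmund splitting $f = g + b$ at level $\lambda$ using $\ic M_p$, so that $\|g\|_{\infty}\lesssim \lambda$ and $b = \sum_{j} b_j$ with each $b_j$ supported on a disjoint Whitney cube $Q_j \subset \{\ic M_p f > \lambda\}$ satisfying $\|b_j\|_p^{p} \lesssim \lambda^{p}|Q_j|$. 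The good part is handled trivially: since $|s_k|\leq 1$,
\[
\int_{\{\ic M_p f\leq\lambda\}}|S_k g|^{2}\,dx \;\leq\; \|g\|_{2}^{2} \;\leq\; \|g\|_{\infty}^{2-p}\|g\|_{p}^{p} \;\lesssim\; \lambda^{2-p}\|f\|_{p}^{p},
\]
with no loss in $k$, which fits comfortably inside the right-hand side of \eqref{eq:tao-S_k}.

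The heart of the argument — and the main obstacle — is the bad-part estimate. Since $\{\ic M_p f\leq\lambda\}$ avoids every $Q_j$, I would exploit the physical-space structure of the kernel of $S_k$, which is concentrated on plates of Heisenberg dimensions $2^{-k/2}\times\cdots\times 2^{-k/2}\times 2^{-k}$ tangent to the unit sphere. The idea is to decompose each $S_k b_j$ against such plates, discard the plates still meeting $Q_j$, and combine the global $L^{p}\to L^{2}$ bound with an orthogonality / square-function argument across the Whitney cubes (using their disjointness, bounded overlap, and the near-vanishing moments of $b_j$). This plate / wave-packet analysis is essentially Tao's original argument, and is where the endpoint restriction hypothesis $p\leq\frac{2n+2}{n+3}$ is used sharply; I expect this step to absorb almost all of the technical work. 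Once \eqref{eq:tao-S_k} is in place, the first inequality follows by writing $\ic B^{\delta_n(p)}f = \sum_{k\leq 0} 2^{k\delta_n(p)} S_k f$ and using the (almost) disjoint frequency supports of the $S_k$, together with a mild dyadic summation on the restricted region, to convert the scale-by-scale bound \eqref{eq:tao-S_k} into the claimed square-integral estimate for $\ic B^{\delta_n(p)}f$.
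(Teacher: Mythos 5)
First, a point of order: the paper offers no proof of this statement at all --- it is quoted verbatim as Theorem 1.1 of Tao's paper on weak-type endpoint bounds and used as a black box, so there is no internal argument to compare yours against. Judged on its own terms, your proposal is an outline of Tao's strategy rather than a proof: the entire difficulty is concentrated in the bad-part estimate of the Calder\'on--Zygmund decomposition, and there you only name the ingredients (plates of dimensions $2^{-k/2}\times\cdots\times 2^{-k/2}\times 2^{-k}$, discarding plates meeting $Q_j$, an orthogonality argument across Whitney cubes) and explicitly defer to ``Tao's original argument.'' Since that step is where the Tomas--Stein hypothesis $p\leq \frac{2n+2}{n+3}$ and the restriction of the integral to $\{\ic{M}_p f\leq\lambda\}$ actually enter, and where all the work lies, the proposal does not establish \eqref{eq:tao-S_k}. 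The good-part computation and the slice-by-slice restriction bound $\|S_k f\|_2^2\lesssim 2^{k}\|f\|_p^2$ are fine, but note that the latter produces a \emph{gain} $2^{k/2}$, not the \emph{loss} $2^{-k\delta_n(p)}$ appearing in \eqref{eq:tao-S_k}; that loss has a different origin (the mismatch between the homogeneities $\lambda^{2-p}\|f\|_p^p$ and $\|f\|_p^2$ in the restricted estimate), so your attribution of where the factor comes from is also off.

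There is a second, more clear-cut gap in the final step. You propose to deduce the first inequality from \eqref{eq:tao-S_k} by summing over $k$. The triangle inequality in $L^2$ of the restricted set gives
\begin{equation*}
\Big\|\ic{B}^{\delta_n(p)}f\Big\|_{L^2(\{\ic{M}_pf\leq\lambda\})}\;\leq\;\sum_{k\leq 0}2^{k\delta_n(p)}\,\big\|S_k f\big\|_{L^2(\{\ic{M}_pf\leq\lambda\})}\;\lesssim\;\Big(\sum_{k\leq 0}2^{k\delta_n(p)}\,2^{-k\delta_n(p)}\Big)\,\lambda^{(2-p)/2}\|f\|_p^{p/2},
\end{equation*}
and the sum in parentheses is $\sum_{k\leq 0}1=\infty$: the weights $2^{k\delta_n(p)}$ exactly cancel the per-scale bounds, so \eqref{eq:tao-S_k} is lossless only scale by scale and does not sum. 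Your appeal to ``(almost) disjoint frequency supports'' does not rescue this, because $L^2$-orthogonality of the pieces $S_k f$ is destroyed once the integration is restricted to the set $\{\ic{M}_p f\leq\lambda\}$ (multiplication by an indicator spreads frequency supports). To close this you would need either a per-scale estimate with an extra factor $2^{k\varepsilon}$ of gain, or to run the whole argument directly on $\ic{B}^{\delta_n(p)}$ as Tao does, rather than deriving it from the $S_k$ bounds.
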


This last result uses in a crucial way the Restriction inequality $R\left( p \to 2 \right)$, and that is why the range for the exponent $p$ is the one given by Tomas-Stein Theorem \ref{thm:Tomas-Stein}. Using the above two results, we can now provide a proof for Propositions \ref{prop-Heisenberg} and \ref{non-Heisenberg}.

\begin{proof}[Proof of Proposition \ref{prop-Heisenberg}]
First, we  will split $f \cdot \one_{\left( 2B_r \right)^c}$ as
\begin{equation*}
f \cdot \one_{\left( 2B_r \right)^c}=\sum_{j \geq 1} f \cdot \one_{\left( 2^{j+1}B_r \setminus 2^j B_r \right)}:=\sum_{j \geq 1} f_j.
\end{equation*}
It will be enough to prove for every $j \geq 1$ that 
\begin{equation*}
\left(  \aver{B_r} \left| S_k\left(f _j\right)(x) \right|^{2} dx \right)^{2} \leq C_{\rho, M} 2^{-k\rho} 2^{-jM} \left( \aver{2^{j+1} B_r \setminus 2^j B_r} \left| f(x)  \right|^{p_0} dx \right)^{1/{p_0}},
\end{equation*}
where $M$ can be chosen to be as large as we wish.

The above estimate will be an application of Theorem \ref{thm:tao-weak-endpoint}. Let $1 \leq p \leq \frac{2n+2}{n+3}$, and note that, since $f_j$ is supported inside $2^{j+1}B_r \setminus 2^j B_r$, we have the inclusion
\begin{equation*}
B_r \subseteq \left\{ x : \ic{M}_p\left[ f_j \right](x) \leq \lambda:=\left( \aver{2^{j+1}B_r} \lft f_j\rg^{p} dz \right)^{1/p}  \right\}.
\end{equation*}
As a consequence of \eqref{eq:tao-S_k}, we have that 
\[
\int_{B_r} \lft S_k(f_j)(z) \rg^2 dz \leq C 2^{-2 k \delta_n(p)} \lambda^{2-p} \left\| f_j \right\|_p^p.
\]
This further leads to the estimate
\begin{equation}
\label{eq:av_2_S_k}
\left( \aver{B_r} \lft S_k(f_j)  \rg^2 dx \right)^{1/2} \lesssim 2^{-k \delta_n(p)} 2^{\frac{\left( j+1 \right) n}{2}} \left( \aver{2^{j+1}B_r} \lft f_j \rg^p dx  \right)^{1/p}.
\end{equation}
This is still far from the desired inequality \eqref{eq:S_k_H}. However, we can use the fast decay of $\ds \check{s}_k$. Indeed, by using that $s_k$ is smooth at the scale $2^k$ and supported in an annulus of measure $\simeq 2^k$, integration by parts yields
\begin{equation}
\label{eq:decay_s_k}
\lft  \check{s}_k(x)\rg \lesssim \frac{2^k}{ \left( 2^k \lft x\rg  \right)^N}, 
\end{equation}
for all $\lft x \rg$ sufficiently large and any integer $N$.

Using the fast decay of $\check{s}_k$, we have that, for any $x \in B_r$, 
\begin{equation*}
\lft S_k(f_j)(x) \rg=\lft \int_{\rr{R}^n} f_j(t) \check{s}_k(x-t) dt\rg \lesssim 2^k \left( 2^k 2^j r  \right)^{-N} \int_{2^{j+1}B_r} \lft f_j(t) \rg dt.
\end{equation*}
A straightforward computation yields that
\begin{equation}
\label{eq:S_k_decay}
\left( \aver{B_r} \lft S_k(f_j) \rg^2 dx  \right)^{1/2} \lesssim \| S_k(f_j)\|_{L^\infty\left(B_r\right)} \lesssim 2^k \left( 2^k 2^j r \right)^{-N} \left( 2^j r \right)^{n} \left( \aver{2^{j+1}B_r} \lft f_j \rg^p dx \right)^{1/p},
\end{equation}
where the implicit constants depend on $N, p$ and the dimension $n$. In the above identity, we have arbitrary decay in the form of $\left(2^j\right)^{-N}$, and this will compensate for the positive power of $2^j$ in \eqref{eq:av_2_S_k}. More exactly, we can interpolate inequalities \eqref{eq:av_2_S_k} and \eqref{eq:S_k_decay} to obtain
\begin{equation}
\label{eq:S_k_p_restr}
\left( \aver{B_r} \lft S_k(f_j) \rg^2 dx  \right)^{1/2} \lesssim 2^{-k \left[\delta_n(p) -\frac{1}{M}   \right]} 2^{-jM} \left( \aver{2^{j+1}B_r} \lft f_j \rg^p dx \right)^{1/p}.
\end{equation}
The equation above holds for any $1 \leq p <\frac{2n+2}{n+3}$. The last step is to interpolate between \eqref{eq:S_k_p_restr} and the trivial $L^2 \mapsto L^2$ estimate for $S_k$ in order to get, for any $p \leq p_0 \leq 2$,
\begin{equation*}
\left( \aver{B_r} \lft S_k(f_j) \rg^2 dx  \right)^{1/2} \lesssim 2^{-k \nu \left(\delta(p) -\frac{1}{M}   \right)} 2^{-j\tilde{M}} \left( \aver{2^{j+1}B_r} \lft f_j \rg^{p_0} dx \right)^{1/{p_0}}.
\end{equation*}
Here $\nu$ is the interpolation coefficient which gives $\dfrac{1}{p_0}=\dfrac{1-\nu}{2}+\dfrac{\nu}{p}$. Hence we obtain inequality \eqref{eq:S_k_H} for any $\ds \rho>\rho_n\left( p_0\right)$, where the critical $\ds \rho_n\left( p_0\right)$ is given by
\begin{equation*}
\rho_n\left( p_0\right):=\nu  \delta_n(p)=\left( n \left( \frac{1}{p}-\frac{1}{2} \right)-\frac{1}{2}\right) \cdot \frac{\frac{1}{p_0}-\frac{1}{2}}{\frac{1}{p}-\frac{1}{2}} .
\end{equation*}
There are two possibilities:
\begin{itemize}
\item[a)] if $\ds 1 \leq p_0 \leq \frac{2n+2}{n+3}$, then we have minimal $\rho_n(p_0)=\delta(p_0)=n \left( \dfrac{1}{p_0}-\dfrac{1}{2}\right)-\dfrac{1}{2}$ by taking $p=p_0$;
\item[b)] if $\ds \frac{2n+2}{n+3} \leq p_0 <2 $, then the minimal $\rho_n(p_0)$ is obtained for $p=\frac{2n+2}{n+3}$: $$\ds \rho_n(p_0)=\dfrac{n-1}{2}\left( \frac{1}{p_0}-\frac{1}{2}\right).$$
\end{itemize}

In other words,
\begin{equation}
\label{formula_rho_n}
\rho_n(p_0)=\max \left\lbrace  n \left( \dfrac{1}{p_0}-\dfrac{1}{2}\right)-\dfrac{1}{2}, \dfrac{n-1}{2}\left( \frac{1}{p_0}-\frac{1}{2}\right) \right \rbrace.
\end{equation}
\end{proof}

\begin{proof}[Proof of Proposition \ref{non-Heisenberg}]
We denote $\ds F:= f \cdot \one_{B(x, 3 \epsilon)}$, and use the following rough estimate:
\begin{align*}
&\int_{B(x, \epsilon)} \lft S_k (F)(z) \rg^2 dz \lesssim \left\| S_k(F)\right\|^2_2=\left\|  \widehat{S_k(F)} \right\|_2^2.
\end{align*}
We want to bound the $L^2$ norm of $\ds \widehat{S_k(F)}$ by the $L^{p}$ norm of $F$, for $p_0<2$. This is possible by making use of the Restriction Theorem in the following way:
\begin{align}
\label{eq:S_k_1}
&\left\| \widehat{S_k(F)}\right\|_2^2=\int_{\rr{R}^2} \lft s_k(\xi)\rg^2 \lft \widehat{F}(\xi) \rg^2 d\xi \\
&\lesssim \int_{1-2^{k+1}}^{1-2^{k-1}} \chi^2\left( 2^{-k}\left( 1-r^2  \right)  \right) \int_{{\mathbb S}^{n-1}} \lft \hat{F}(r \theta) \rg^2 d \theta r dr 
\lesssim 2^k \| F \|_{p}^2.\nonumber
\end{align}  

The inequality above is true whenever we have an $R\left(p \to 2 \right)$ restriction result, that is, whenever $\ds 1 \leq p \leq \dfrac{2\left(n+1  \right)}{n+3}$. Now we interpolate the inequality in \eqref{eq:S_k_1} with the much easier estimate $\ds \| S_k(F) \|_2 \lesssim \| F\|_2$, in order to obtain, for any $p \leq p_0 \leq 2$:
\begin{equation*}
\| S_k(F)\|_2 \lesssim 2^{\frac{k}{2} \cdot \theta} \|  F \|_{p_0}, \qquad \text{  where   } \theta=\frac{\frac{1}{p_0}-\frac{1}{2}}{\frac{1}{p}-\frac{1}{2}}.
\end{equation*}
To summarize, we have that 
\[
\left(  \int_{B(x, 2\epsilon)} \lft S_k(f \cdot \one_{B(x, 3 \epsilon)}) \rg^2 dz \right)^{1/2} \lesssim 2^{\frac{k}{2} \cdot \theta} \left( \int_{B(x, 3 \epsilon)} \lft f \rg^{p_0}  \right)^{1/{p_0}}.
\]
If we want to express it using averages, the above inequality becomes
\begin{equation*}
\left( \aver{B(x, 2\epsilon)} \left| S_k\left( f \one_{B(x, 3 \epsilon)}  \right) \rg^2 dz  \right)^{1/2} \lesssim 2^{-k \rho} \left(  \aver{B(x, 3\epsilon)} \lft f \rg^{p_0} dz \right)^{1/{p_0}},
\end{equation*}
where we used the fact that $\epsilon 2^k \leq 1$. Moreover, $\rho$ is strictly greater than
\begin{equation}
\label{eq:formula-rho}
\tilde{\rho}:=\frac{\frac{1}{p_0}-\frac{1}{2}}{\frac{1}{p}-\frac{1}{2}} \cdot \left( n \left( \frac{1}{p}-\frac{1}{2} \right)-\frac{1}{2}   \right)=\frac{\frac{1}{p_0}-\frac{1}{2}}{\frac{1}{p}-\frac{1}{2}} \cdot \delta(p).
\end{equation}
In particular, if we want $\tilde{\rho}$ as small as possible, we recover the critical index $\ds \rho_n\left( p_0\right)$ from \eqref{formula_rho_n}. This ends the proof of Proposition \ref{non-Heisenberg}.
\end{proof}

\section{Boundedness of new maximal Bochner-Riesz operators: proof of Proposition \ref{propo:maxBR}}\label{sec:maximal_op}

We begin with a few remarks on the statement of Proposition \ref{propo:maxBR}:
\begin{remark}
 \begin{itemize}
 \item the Bochner-Riesz conjecture asserts exactly that $\tilde \delta(p_1)$ defined in \eqref{cond:delta-p_1} should be equal to $\delta(p_1)$
\[
\delta\left( p_1\right)=\max \left( n \lft \frac{1}{p_1}-\frac{1}{2}\rg -\frac{1}{2}, 0 \right).
\]

\item In particular, if $p_0 \geq \frac{2(n+1)}{n+3}$ then $p_1\geq \frac{2n}{n+1}$, and so the Bochner-Riesz conjecture would imply that $\tilde \delta(p_1)=0$. And so, $\ds\bar{\delta}_n(p_0)=\frac{n-1}{2} \left(\frac{1}{p_0}-\frac{1}{2}\right)$.

\item Using the inequality $\ds \tilde{\delta}(p_1) \geq \delta(p_1)$, which corresponds to the necessary condition in the Bochner-Riesz conjecture, we note that $\ds \bar{\delta}_n(p_0) \geq \rho_n(p_0)$.

\end{itemize}
\end{remark}

\begin{proof}[Proof of Proposition \ref{propo:maxBR}]
Using Proposition \ref{non-Heisenberg}, for every $x, y$ and every $\epsilon>0$ with $|x-y|<\epsilon$, we get 
\begin{align*}
  \left( \aver{B\left( y, \epsilon \right)} \vert \ic{B}_\epsilon^\delta (f {\bf 1}_{B(x,3\epsilon)})  \vert^2 dz \right)^{1/2} & \lesssim   \left( \aver{B\left( x, 2\epsilon \right)} \vert \ic{B}_\epsilon^\delta (f {\bf 1}_{B(x,3\epsilon)})  \vert^2 dz \right)^{1/2} \\
  &   \lesssim \sum_{\genfrac{}{}{0pt}{}{k}{2^k \epsilon\leq 1}}  2^{k\delta} \left( \aver{B\left( x, \epsilon \right)} \vert S_k (f {\bf 1}_{B(x,3\epsilon)})  \vert^2 dz \right)^{1/2} \\
  & \lesssim \sum_{\genfrac{}{}{0pt}{}{k}{2^k \epsilon\leq 1}}  2^{k\delta} 2^{-k\rho} \left( \aver{B\left( x, 3\epsilon \right)} \vert f  \vert^{p_0} dz \right)^{1/p_0} \\
  & \lesssim \ic{M}_{p_0}[f](x),
 \end{align*}
 whenever $\ds \delta>\rho > \bar{\delta}_n(p_0) \geq \rho_n \left( p_0 \right)$. Because we can take $\rho$ arbitrarily close to $\rho_n \left( p_0 \right)$, for now, the only viable constraint is that $\ds \delta >\rho_n \left( p_0 \right)$.

Since the $L^{p_0}$ Hardy-Littlewood maximal function $\ic{M}_{p_0}$ is of weak type $(p_0,p_0)$, this term is acceptable and we are reduced to estimating only the second maximal operator:
 \begin{equation*}
\ic{B}^{\delta, **}(f)(x):=\sup_\epsilon \ \sup_{|x-y|<\epsilon} \ \left( \aver{B\left( y, \epsilon \right)} \vert \ic{B}_\epsilon^\delta (f)  \vert^2 dz \right)^{1/2}.
\end{equation*}

 For fixed $x, y\in \rr{R}^n$ and $\epsilon >0$ with $|x-y|<\epsilon$, we write $\ds \ic{B}^{\delta}_\epsilon$ as
\begin{equation*}
\ic{B}^{\delta}_\epsilon(f)=\sum_{\genfrac{}{}{0pt}{}{k\leq 0}{2^k \epsilon \leq 1}}  2^{\delta k} S_k(f).
\end{equation*}
Noting that $\check{s}_k\left( \cdot \right)$ satisfies the decaying estimate
\[
\check{s}_k\left( z\right) \lesssim \frac{2^k}{\left( 1+\lft z\rg  \right)^{\frac{n-1}{2}} \left( 1+2^k \lft z \rg  \right)^N},
\]
we obtain that 
\begin{equation*}
\left \| S_k\left( f \right) \right\|_{L^\infty\left(B\left(y, \epsilon\right)\right)} \lesssim 2^{-k \left( \frac{n-1}{2} \right)^+} \ic{M}\left[ f \right](x).
\end{equation*}

The idea is to interpolate the above estimate with the trivial inequality
\begin{equation*}
\left( \aver{B\left(y, \epsilon \right)} \lft S_k\left( f \right) \rg^{p_1} dz \right)^{1/{p_1}} \lesssim \ic{M}_{p_1}\left[ S_k(f) \right](x),
\end{equation*}
for some $1\leq p_1\leq 2$ (precisely the one defined in \eqref{def:p_1}), in order to get $L^{p_0}$-$L^2$ estimates. Consequently, pick $0 \leq \theta \leq 1$ so that 
\begin{equation}
\label{eq:def-theta-hi}
\frac{1}{2}=\frac{\theta}{\infty}+\frac{1-\theta}{p_1}=\frac{1-\theta}{p_1}.
\end{equation}
Then we obtain that 
\begin{equation*}
\left( \aver{B\left(y, \epsilon \right)} \lft S_k\left( f \right) \rg^{2} dz \right)^{1/{2}} \lesssim 2^{-k \theta\left( \frac{n-1}{2} \right)^+} \cdot \Big( \ic{M}\left[ f \right](x)  \Big)^\theta \cdot \Big(\ic{M}_{p_1}\left[ S_k(f) \right] (x)\Big)^{1-\theta},
\end{equation*}
which further implies the pointwise estimate
\begin{align*}
\ic{B}^{\delta, **}(f)(x) &\lesssim \sup_\epsilon \ \sup_{|x-y|<\epsilon} \  \sum_{\genfrac{}{}{0pt}{}{k\leq 0}{2^k \epsilon \leq 1}} 2^{k \delta} \left( \aver{B\left(y, \epsilon \right)} \lft S_k\left( f \right) \rg^{2} dz \right)^{1/{2}} \\&\lesssim \sup_\epsilon \ \sum_{\genfrac{}{}{0pt}{}{k\leq 0}{2^k \epsilon \leq 1}} 2^{k\left( \delta-  \theta \left( \frac{n-1}{2}\right)^+ \right)} \Big(\ic{M}\left[ f \right](x)   \Big)^\theta \cdot \Big(  \ic{M}_{p_1} \left[ S_k \left( f \right)  \right](x) \Big)^{1-\theta}.
\end{align*}

We aim to estimate $\ds \ic{B}^{\delta, **}$ in some weak $L^{p_0}$ space, where $p_0$ is given by the formula
\[
\frac{1}{p_0}=\theta+\frac{1-\theta}{p_1}=\theta+\frac{1}{2}.
\]

Letting $\ds \sigma:= \delta - \theta \left( \frac{n-1}{2}\right)^+$, H\"{o}lder's inequality for weak $L^p$ spaces implies that 
\begin{equation*}
\left\| \ic{B}^{\delta, **}(f)\right\|_{L^{p_0, \infty}} \lesssim \sum_{k \leq 0} 2^{k\sigma} \| f \|_{L^{1}}^\theta \cdot \| S_k(f)\|_{L^{p_1}}^{1-\theta}.
\end{equation*}
Now we need to use the $L^{p_1}$ boundedness of $S_k$; we note that we have, uniformly in $k \leq 0$, that
\begin{equation}
\label{cond:delta-p_1}
\| S_k(f) \|_{p_1} \lesssim 2^{-k \tilde{\delta}\left( p_1 \right)^+} \|f\|_{p_1},
\end{equation}
where $\tilde{\delta}\left( p_1 \right)$ is the smallest positive number for which \eqref{cond:delta-p_1} is true.

We get that 
\begin{align*}
\left \|  \ic{B}^{\delta, **}\left( f \right) \right\|_{L^{p_0, \infty}} &\lesssim \sum_{k \leq 0} 2^{k \left( \sigma -\tilde{\delta}\left( p_1 \right) \right)} \left \| f \right\|_{L^1}^\theta \left \| f \right\|_{L^{p_1}}^{1-\theta} \\
&\lesssim \left \| f \right\|_{L^1}^\theta \left \| f \right\|_{L^{p_1}}^{1-\theta} ,
\end{align*}
provided $\sigma> \tilde{\delta}(p_1)$.

Now we use restricted type interpolation to deduce our result: for arbitrary functions $|f| \leq {\bf 1}_E$ for some subset $E\subset {\mathbb R}^2$, we obtain
\begin{align*}
 \left\| \ic{B}^{\delta, **}(f) \right\|_{L^{p_0,\infty}}  & \lesssim |E|^{\theta} |E|^{(1-\theta)/p_1}  = |E|^{1/p_0}.
 \end{align*} 
We conclude that $\ic{B}^{\delta,**}$ is of restricted weak type $(p_0,p_0)$ as soon as 
$$\delta> \bar{\delta}_n\left(p_0 \right):=\theta \frac{n-1}{2}+\tilde{\delta}\left( p_1 \right).$$
For such fixed $\delta$, there is enough room to modify a bit the exponent $p_0$ ( preserving still the above condition), so we get different restricted weak type estimates for exponents around $p_0$ and by interpolation we obtain a strong $L^{p_0}$-boundedness.

Overall, the constraint on $\delta$ is 
\begin{equation*}
\delta> \max \left\lbrace n \left( \frac{1}{p_0}-\frac{1}{2}  \right)-\frac{1}{2},  \frac{n-1}{2} \left( \frac{1}{p_0}-\frac{1}{2}  \right)+\tilde{\delta}\left( p_1 \right)   \right\rbrace.
\end{equation*}

\end{proof}

\section{A close examination of the two-dimensional case}\label{sec:dim2}

In this section, we restrict our attention to the case $n=2$, where the Bochner-Riesz conjecture is fully solved. Moreover, we will focus on the range of Lebesgue exponents $(6/5,6)$, which corresponds to $\delta>1/6$ for the $L^p$-boundedness of $\ic{B}^\delta$. We aim to explain how we can play with Lebesgue exponents to obtain other estimates, more symmetrical around the exponent $2$.

We have seen that for $\delta>\frac{1}{6}$, we have sufficiently localized information of the type $L^{6/5}$-$L^2$, which allows us to obtain the following sparse control: for arbitrary smooth and compactly supported functions $f,g$ there exists a sparse collection $\ic{Q}$ such that
$$
\lft \langle \ic{B}^\delta f, g \rangle \rg \lesssim \sum_{Q \in \ic{Q}} \left( \aver{6Q} \lft f\rg^{6/5}dx\right)^{5/6} \cdot \left( \aver{6Q} \lft g\rg^{2}dx\right)^{1/{2}} \cdot \lft Q \rg.
$$
 
The Bochner-Riesz operator being a Fourier multiplier is self-adjoint and so it could seem more natural to look for a range of exponents that is symmetric around $2$. Indeed, we are going to prove the following:

\begin{theorem}
\label{thm:general-sparse-bis}
Let $p_0, q_0 \in [6/5,6]$ be any exponent such that $p_0<q_0$ and
$$ \frac{1}{p_0}-\frac{1}{q_0}\leq \frac{1}{3}.$$
Then for any $\delta > 1/6$, and any $f, g$ compactly supported, there exists a sparse collection $\calS$ (depending on $f,g$) with 
\begin{align}\label{eq:bilin3}
	\left| \int \ic B^\delta(f) \cdot g\, dx \right| 
	\leq C \sum_{P \in \calS}  \left(\aver{6P} |f|^{p_0}\, dx\right)^{1/p_0} \left(\aver{6P} |g|^{q_0'}\, dx\right)^{1/q_0'} |P| .
\end{align}
\end{theorem}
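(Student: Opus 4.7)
The plan is to rerun the proof of Theorem \ref{thm:sparse_coll} almost verbatim, with the $L^2$-averages of $g$ systematically replaced by $L^{q_0'}$-averages. The skeleton (exceptional set $E$ built from $\ic{B}^{\delta,*}f$, $\ic{B}^{\delta,**}f$ and $\ic M_{p_0}f$, dyadic covering by maximal cubes $Q_j$, splitting into an iterated piece $\ic B^\delta(f\one_{6Q_j})\one_{Q_j}$ and an off-diagonal piece $\ic B^\delta(f\one_{(6Q_j)^c})\one_{Q_j}$) is unchanged, since $E$ depends only on $f$. The weak-type $(p_0,p_0)$ bounds of Proposition \ref{propo:maxBR} are still available because in dimension two one checks $\bar{\delta}_2(p_0)\le 1/6$ for $p_0 \in [6/5,2]$, so $\delta>1/6$ suffices. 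Only two places must be revisited: the off-diagonal estimate for $S_k$ from Proposition \ref{prop-Heisenberg}, which must now pair an $L^{p_0}$-input with an $L^{q_0}$-output, and the Cauchy--Schwarz step in the final bilinear pairing, which becomes H\"older with exponents $(q_0,q_0')$.

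Two of the three subregions of the diamond reduce immediately to the baseline. If $p_0\le q_0\le 2$ then $q_0'\ge 2$ and Jensen's inequality $(\aver{6P}|g|^2)^{1/2}\le (\aver{6P}|g|^{q_0'})^{1/q_0'}$ upgrades Theorem \ref{thm:sparse-2dim-simple} to the desired $(p_0,q_0')$-bound. If $2\le p_0<q_0$, the self-adjointness of $\ic{B}^\delta$ (its symbol is real-valued) lets us swap $f$ and $g$, reducing to a $(q_0',p_0')$-bound which falls into the previous case since $q_0'\le p_0'\le 2$. The only substantive subregion is the ``mixed'' triangle $p_0\le 2\le q_0$, whose corners are $(6/5,2)$ (baseline), $(2,6)$ (dual baseline) and the excluded limit $(2,2)$. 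In this triangle we need an $L^{p_0}\to L^{q_0}$ analogue of Proposition \ref{prop-Heisenberg},
\[
\Bigl(\aver{B_r} |S_k(f\one_{(2B_r)^c})|^{q_0}\, dx\Bigr)^{1/q_0}
\le C\, 2^{-k\rho}\sum_{j\ge 1} 2^{-jM}\Bigl(\aver{2^{j+1}B_r \setminus 2^j B_r}|f|^{p_0}\, dx\Bigr)^{1/p_0},
\]
with critical decay $\rho=\tfrac{1}{2}(1/p_0-1/q_0)\le 1/6$, needed only at the critical scale $2^k r\sim 1$ forced by the decomposition $\ic B^\delta-\ic B^\delta_{2r}=\sum_{2r\ge 2^{k+1}r\ge 1}2^{k\delta} S_k$. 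Since $1/p_0-1/q_0\le 1/3$ and $\delta>1/6$, we get $\rho<\delta$, so the geometric series $\sum_k 2^{k(\delta-\rho)}$ from Section \ref{sec:Control} still converges.

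The main technical point is the dual endpoint at $(2,6)$: Tao's Theorem \ref{thm:tao-weak-endpoint} is a weak-type statement adapted to the direction $L^{6/5}\to L^2$ and does not formally dualize to a strong $L^2\to L^6$ estimate. At the critical scale $2^k r\sim 1$, however, this is not needed: the global bound $\|S_k G\|_{L^6}\lesssim 2^{k/2}\|G\|_{L^2}$, obtained by dualizing the restriction inequality $R(6/5\to 2)$ via the self-adjointness of $S_k$ (which uses Carleson--Sj\"olin in dimension two), when restricted to $B_r$ and normalized produces exactly $2^{-k/6}$, because $|B_r|^{-1/6}|\mathrm{ann}_j|^{1/2}\cdot 2^{k/2}\sim 2^{-k/6}\,2^j$ when $r\sim 2^{-k}$. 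The Schwartz decay of $\check s_k$ absorbs the annular loss $2^j$ by interpolation with the pointwise $L^1\to L^\infty$ estimate, exactly as in the proof of Proposition \ref{prop-Heisenberg}. Riesz--Thorin interpolation between the three corner estimates (all with critical decay $\le 1/6$) then yields the required analogue with the affine formula $\rho=\tfrac{1}{2}(1/p_0-1/q_0)$, and the rest of the argument -- control of $\ic B^\delta(f)\one_{Q_0\setminus E}$, the near truncated piece $\ic B^\delta_{2r_j}$, summation over the disjoint cubes $Q_j$ via H\"older, and sparsity verification -- follows Section \ref{sec:Control} word for word.
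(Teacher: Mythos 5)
Your architecture coincides with the paper's: split the exponent region into three pieces, dispose of $q_0\le 2$ by Jensen and of $p_0\ge 2$ by self-adjointness (both reductions are correct), and in the mixed case $p_0\le 2\le q_0$ upgrade the key propositions to $L^{p_0}\to L^{q_0}$ statements before rerunning the stopping-time argument. However, there are two genuine gaps in the mixed case. The first is your claim that the off-diagonal estimate is ``needed only at the critical scale $2^kr\sim 1$.'' The decomposition $\ic B^\delta-\ic B^\delta_{2r}=\sum_{2r\ge 2^{k+1}r\ge 1}2^{k\delta}S_k$ runs over all $k\le 0$ with $2^{k+1}r\ge 1$, so $2^kr$ ranges over $[1/2,r]$ and is as large as $r$ when $k=0$. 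Your derivation of the $(2,6)$ corner from the global bound $\|S_kG\|_{L^6}\lesssim 2^{k/2}\|G\|_{L^2}$ yields, after normalization, a constant $2^{k/2}r^{2/3}2^j=2^{-k/6}(2^kr)^{2/3}2^j$, not $2^{-k/6}2^j$; the factor $(2^kr)^{2/3}$ equals $1$ only at the critical scale, and $\sum_k 2^{k\delta}2^{-k/6}(2^kr)^{2/3}$ contributes an unbounded $r^{2/3}$ from $k=0$. The paper sidesteps this by rewriting Proposition \ref{prop-Heisenberg} as the localized operator-norm bound $\|{\bf 1}_{\widetilde{B_r}}S_k{\bf 1}_{B_r}\|_{L^{6/5}\to L^2}\lesssim r^{-2/3}2^{-k\rho}(1+d(B_r,\widetilde{B_r})/r)^{-M}$, valid for all $2^kr\ge 1$, and dualizing \emph{that}: it is a strong-type bound between indicator-localized pieces, so your concern about dualizing Tao's weak-type theorem does not arise. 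Your route can be repaired, but only by letting the interpolation with the kernel decay of $\check s_k$ absorb the polynomial loss in $2^kr$ in addition to the loss in $2^j$; as written you invoke it only for the latter.

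The second gap is the assertion that only Proposition \ref{prop-Heisenberg} and the final H\"older step need revisiting. Keeping $\ic B^{\delta,*}$ and $\ic B^{\delta,**}$ with $L^2$-averages is insufficient when $q_0>2$: the near piece $\ic B^\delta_{2r}(f\cdot\one_{(6Q_j)^c})$ is controlled on $Q_j$ only through an $L^{2}$-average via $\ic B^{\delta,*}$, and an $L^2$-average cannot be paired with the $L^{q_0'}$-average of $g$ since $q_0'<2$ (H\"older goes the wrong way). One must redefine both maximal operators with $L^{q_0}$-averages, prove the corresponding $L^{p_0}\to L^{q_0}$ version of Proposition \ref{non-Heisenberg} by the same duality-and-interpolation device, and recheck Proposition \ref{propo:maxBR} for the modified operators; the paper does exactly this, observing that the auxiliary exponent $p_1$ stays in $[4/3,4]$ so the weak $(p_0,p_0)$ bound persists for all $\delta>1/6$.
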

\begin{proof}
The idea is that the gain of integrability that we have to use (from $L^{p_0}$ to $L^{q_0}$) is lower than the one we have proved since:
$$ \frac{1}{p_0}-\frac{1}{q_0}\leq \frac{1}{3}= \frac{1}{2}- \frac{5}{6}.$$

We have first to check that we can prove such $L^{p_0}$-$L^{q_0}$ version of Propositions \ref{prop-Heisenberg}, \ref{non-Heisenberg} and \ref{propo:maxBR}. Here, we will preserve the notation  appearing in these propositions.

\medskip

For Proposition \ref{prop-Heisenberg}, what we have proved is that
$$ \| {\bf 1}_{\widetilde{B_r}} S_k {\bf 1}_{B_r} \|_{L^{6/5}\to L^2} \lesssim r^{-2/3} 2^{-k\rho} \left(1+\frac{d(B_r,\widetilde{B_r})}{r}\right)^{-M}$$
for any balls $B_r,\widetilde{B_r}$ of radius $r$ with $d(B_r,\widetilde{B_r})\geq r.$
Since $S_k$ is self-adjoint, we have also 
$$ \| {\bf 1}_{\widetilde{B_r}} S_k {\bf 1}_{B_r} \|_{L^{2}\to L^6} \lesssim r^{-2/3} 2^{-k\rho} \left(1+\frac{d(B_r,\widetilde{B_r})}{r}\right)^{-M}.$$
Interpolating these two estimates, we can prove
$$ \| {\bf 1}_{\widetilde{B_r}} S_k {\bf 1}_{B_r} \|_{L^{p_0}\to L^{q_0}} \lesssim r^{-2\left(\frac{1}{p_0}-\frac{1}{q_0}\right)} 2^{-k\rho} \left(1+\frac{d(B_r,\widetilde{B_r})}{r}\right)^{-M},$$
which then by summing over a covering of balls, implies a $L^{p_0}$-$L^{q_0}$ version of Proposition \ref{prop-Heisenberg}.

\medskip

For Proposition \ref{non-Heisenberg}, which corresponds to the diagonal part, we have proved that 
$$ \| {\bf 1}_{B_r} S_k {\bf 1}_{B_r} \|_{L^{6/5}\to L^2} \lesssim  2^{k/2} $$
for any balls $B_r$ of radius $r$. By duality and then interpolation, we may also obtain that 
$$ \| {\bf 1}_{B_r} S_k {\bf 1}_{B_r} \|_{L^{p_0}\to L^{q_0}} \lesssim 2^{k/2} r^{-2\left(\frac{1}{p_0}-\frac{1}{q_0}-\frac{1}{3}\right)}.$$
By covering $3B_r$ with balls of radius $r$, it yields that 
$$ \left( \aver{B_r} \lft S_k(f \cdot \one_{B_{3r}}) \rg^{q_0} dx \right)^{1/{q_0}}  \lesssim 2^{-k \rho} \left( \aver{B_{3r}} \lft f \rg^{p_0} dx
 \right)^{1/{p_0}},$$
which is a $L^{p_0}$-$L^{q_0}$ version of Proposition \ref{non-Heisenberg}.

\medskip

For Proposition \ref{propo:maxBR}, we now study a $L^{p_0}$-$L^{q_0}$ version of the maximal operators which are
$$
\ic{B}^{\delta, *}(f)(x):= \sup_{\epsilon>0} \ \sup_{|x-y|<\epsilon}\ \left( \aver{B\left(y, \epsilon\right)} \lft \ic{B}^\delta_\epsilon \left( f \cdot \one_{B\left( x, 3 \epsilon \right)^c}  \right)(z) \rg^{q_0} dz  \right)^{1/q_0}
$$
and
$$ 
\ic{B}^{\delta, **}(f)(x):= \sup_{\epsilon>0} \ \sup_{|x-y|<\epsilon} \ \left( \aver{B\left(y, \epsilon\right)} \lft \ic{B}^\delta_\epsilon \left( f \right)(z) \rg^{q_0} dz  \right)^{1/q_0}.
$$
Then following the exact same proof of Proposition \ref{propo:maxBR}, we obtain that these two maximal operators are of weak-type $(p_0,p_0)$ as soon as $\delta>1/6$ (because the exponent $p_1$, used in the proof, will belong to the range $[4/3,4]$ for which we know that $S_k$ are uniformly bounded in $L^{p_1}$ by any positive power of $2^{-k}$).

\medskip

So we have checked that we can easily obtain a $L^{p_0}$-$L^{q_0}$ version of the main technical propositions. Then we may repeat the selection algorithm explained for Theorem \ref{thm:sparse_coll} and this concludes the proof.
\end{proof}

On the other hand, we can obtain estimates for smaller values of $\delta$, but with other kind of constraints:

\begin{theorem}[Theorem \ref{thm:sparse_coll}  in two dimensions]
Let $1 \leq p_0<2$ and define $\bar{\delta}_2$ as
\begin{equation}
\label{eq:def-d-p_0-critical}
\bar{\delta}_2\left(p_0 \right)=\begin{cases}
& \nu_2\left(  p_0\right), \qquad   \text{  if   }  \frac{6}{5} \leq p_0 \leq 2 \\
&\nu_2\left(  p_0\right)+\frac{1}{1-2 \nu_2\left(  p_0\right)} -\frac{3}{2}, \qquad \text{  if   } 1 \leq p_0 \leq \frac{6}{5},
\end{cases}
\end{equation}
where $\ds \nu_2:=\frac{1}{2} \left(\frac{1}{p_0}-\frac{1}{2}\right)$.
Then for any $\delta> \tilde{\delta}_2(p_0)$, and any functions $f$ and $g$ that are compactly supported, there exists a sparse collection $\ic{S}$ (depending on $f$ and $g$), for which
\[
\lft \langle \ic{B}^\delta(f), g \rangle \rg \leq C \sum_{Q \in \calS}  \left(\aver{6Q} |f|^{p_0}\, dx\right)^{1/p_0} \left(\aver{6Q} |g|^{2}\, dx\right)^{1/2} |Q|.
\]
\end{theorem}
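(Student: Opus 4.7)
The plan is to derive this statement as a direct specialization of Theorem \ref{thm:sparse_coll} to dimension $n=2$, combined with the Carleson--Sj\"olin resolution of the Bochner--Riesz conjecture in the plane. All the substantive analytic work has already been packaged into Propositions \ref{prop-Heisenberg}, \ref{non-Heisenberg} and \ref{propo:maxBR}; what remains is to recognize the piecewise formula \eqref{eq:def-d-p_0-critical} as the explicit value of $\bar{\delta}_2(p_0)$ from \eqref{eq:delta_n-bar} once the Bochner--Riesz conjecture is known to hold.

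First, specializing \eqref{eq:delta_n-bar} to $n=2$ I would write
\begin{equation*}
\bar{\delta}_2(p_0) \;=\; \tilde{\delta}(p_1) + \tfrac{1}{2}\!\left(\tfrac{1}{p_0} - \tfrac{1}{2}\right) \;=\; \tilde{\delta}(p_1) + \nu_2(p_0),
\end{equation*}
where $p_1 = 2(3/2 - 1/p_0) = 3 - 2/p_0$; note the convenient identity $p_1/2 = 3/2 - 1/p_0 = 1 - 2\nu_2(p_0)$. Since Carleson--Sj\"olin gives the sharp Bochner--Riesz threshold in two dimensions, $\tilde{\delta}(p_1) = \delta(p_1) = \max\{2/p_1 - 3/2,\,0\}$, and the maximum is achieved at $0$ precisely when $p_1 \geq 4/3$, equivalently $p_0 \geq 6/5$.

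I would then split into two cases. If $6/5 \leq p_0 < 2$, then $\tilde{\delta}(p_1) = 0$, so $\bar{\delta}_2(p_0) = \nu_2(p_0)$, matching the first line of \eqref{eq:def-d-p_0-critical}. If $1 \leq p_0 \leq 6/5$, then $\tilde{\delta}(p_1) = 2/p_1 - 3/2$; substituting $p_1/2 = 1 - 2\nu_2(p_0)$ converts this into
\begin{equation*}
\bar{\delta}_2(p_0) \;=\; \nu_2(p_0) + \frac{1}{1 - 2\nu_2(p_0)} - \frac{3}{2},
\end{equation*}
which is the second line of \eqref{eq:def-d-p_0-critical}. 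Applying Theorem \ref{thm:sparse_coll} with these values then yields the claimed sparse bilinear form bound.

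There is essentially no obstacle beyond unpacking the definitions: the Restriction-based localized $L^{p_0}$--$L^2$ estimates for $S_k$ and the weak $(p_0,p_0)$ bounds for the maximal operators $\ic{B}^{\delta, *}$ and $\ic{B}^{\delta, **}$ were established in the earlier propositions, and the only genuinely two-dimensional input used beyond the general theorem is Carleson--Sj\"olin, which pins down the value of $\tilde{\delta}(p_1)$. The one piece of bookkeeping to double-check is the equivalence $p_0 \geq 6/5 \iff p_1 \geq 4/3$, which determines where the transition between the two formulas in \eqref{eq:def-d-p_0-critical} takes place.
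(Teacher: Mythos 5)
Your proposal is correct and follows exactly the route the paper intends: specialize Theorem \ref{thm:sparse_coll} to $n=2$, use Carleson--Sj\"olin to replace $\tilde{\delta}(p_1)$ by $\delta(p_1)=\max\{2/p_1-3/2,0\}$, and verify via $p_1/2=1-2\nu_2(p_0)$ that the two cases of \eqref{eq:def-d-p_0-critical} correspond to $p_1\gtrless 4/3$, i.e.\ $p_0\gtrless 6/5$. Your algebra checks out (the paper leaves it implicit), so there is nothing further to add.
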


The proof of this result relies on the fact that the Bochner-Riesz conjecture is completely solved in two dimensions, and because of this we can write down an exact expression for $\tilde{\delta}_2$ and $\bar{\delta}_2$:

\begin{proposition} On ${\mathbb R}^2$, let $p_0\in(1,2)$, and $\bar{\delta}_2$ be defined by \eqref{eq:def-d-p_0-critical}. Then for every $\delta>\bar{\delta}_2(p_0)$ the maximal operators $\ic{B}^{\delta,*}$ and $\ic{B}^{\delta,**}$ are of weak-type $(p_0,p_0)$.
\end{proposition}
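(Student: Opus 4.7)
The plan is to specialize Proposition \ref{propo:maxBR} to $n=2$ and then invoke the Carleson-Sj\"olin theorem---which fully resolves the Bochner-Riesz conjecture in two dimensions---in order to convert the abstract critical index $\tilde{\delta}(p_1)$ into the explicit piecewise expression appearing in \eqref{eq:def-d-p_0-critical}. No new analytic input is needed beyond the material already developed; the only new work is an algebraic verification.

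First I would recall that, by Proposition \ref{propo:maxBR}, both $\ic{B}^{\delta,*}$ and $\ic{B}^{\delta,**}$ are of weak-type $(p_0,p_0)$ as soon as
\[
\delta > \bar{\delta}_n(p_0) = \tilde{\delta}(p_1) + \frac{n-1}{2}\left(\frac{1}{p_0}-\frac{1}{2}\right),
\]
where $p_1 = 2\left(\frac{3}{2}-\frac{1}{p_0}\right) \in (1,2)$ and $\tilde{\delta}(p_1)$ is the smallest positive exponent for which $\ic{B}^{\tilde{\delta}(p_1)}$ is $L^{p_1}$-bounded. Setting $n=2$ reduces the task to computing $\tilde{\delta}(p_1)$ explicitly.

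Carleson-Sj\"olin then gives $\tilde{\delta}(p_1) = \delta(p_1) = \max\left(2\left|\tfrac{1}{p_1}-\tfrac{1}{2}\right|-\tfrac{1}{2},\,0\right)$. A direct computation yields $\frac{1}{p_1}-\frac{1}{2} = \frac{2-p_0}{2(3p_0-2)}$, so
\[
\delta(p_1) = \begin{cases} \frac{6-5p_0}{2(3p_0-2)}, & 1<p_0 \leq 6/5, \\ 0, & 6/5 \leq p_0 < 2. \end{cases}
\]
Adding the correction term $\frac{1}{2}\left(\frac{1}{p_0}-\frac{1}{2}\right) = \nu_2(p_0)$ gives $\bar{\delta}_2(p_0) = \nu_2(p_0) + \delta(p_1)$.

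It remains only to verify that this matches the formula \eqref{eq:def-d-p_0-critical}. This follows from the identities $1-2\nu_2(p_0) = \frac{3p_0-2}{2p_0}$ and consequently $\frac{1}{1-2\nu_2(p_0)}-\frac{3}{2} = \frac{6-5p_0}{2(3p_0-2)} = \delta(p_1)$, which is exactly the additive correction in the small-$p_0$ regime. Thus the proposition is an immediate corollary of Proposition \ref{propo:maxBR} in dimension two. The only potential obstacle is to carry out the arithmetic correctly; since all the hard analytic content already resides in Proposition \ref{propo:maxBR} together with Carleson-Sj\"olin, there is no further substantive difficulty.
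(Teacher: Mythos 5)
Your proposal is correct and is exactly the argument the paper intends: the paper gives no written proof of this proposition, stating only that it follows because Carleson--Sj\"olin resolves the conjecture in dimension two, so that $\tilde{\delta}(p_1)=\delta(p_1)$ can be substituted into Proposition \ref{propo:maxBR}. Your algebraic verification is accurate, in particular $1/p_1-1/2=\frac{2-p_0}{2(3p_0-2)}$ and $\frac{1}{1-2\nu_2(p_0)}-\frac{3}{2}=\frac{6-5p_0}{2(3p_0-2)}=\delta(p_1)$ for $p_0\leq 6/5$, which reproduces \eqref{eq:def-d-p_0-critical}.
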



\section{Quantitative weighted estimates and vector-valued extensions for the Bochner-Riesz}\label{sec:weights}
In this section, we present the weighted estimates implied by our Theorem \ref{thm:sparse_coll}, as well as related vector-valued extensions. To state properly the theorems and clarify their proofs, we recall first a few facts about weights, Muckenhoupt $A_p$ classes and the main properties of the weights therein.
\medskip

For  $1<p<\infty$, a weight $w$ (that is, a non-negative, locally integrable function) is a Muckenhoupt $A_p$ weight if it satisfies the condition
\begin{equation*}
[w]_{A_p}:=\sup_{B}\left(\frac{1}{|B|}\int_{B}w(y)\ dy \right)\left(\frac{1}{|B|}\int_{B}w(y)^{1-p'}\ dy \right)^{p-1}<\infty, 
\end{equation*}
where the supremum is taken over all the balls $B$ of $\mathbb{R}^n$. This constant $[w]_{A_p}$ is known as the characteristic constant of the weight $w$. For the limiting case $p=1$, the class $A_{1}$ is defined to be the set of weights $w$ such that
\begin{align*}
[w]_{A_1}:=\sup_B\bigg(\frac{1}{|B|}\int_B w(x)dx \bigg) \bigg(\esssup_{y\in B} w(y)^{-1} \bigg)<+\infty.
\end{align*}
 One of the main features of Muckenhoupt weights is the reverse H\"older property. More precisely, we say that a weight $w$  satisfies the Reverse H\"older inequality with exponent $s>1$ if there exists a constant C such that for every ball $B$ in $\mathbb{R}^n$
\begin{equation}\label{Rhprop}
\left(\frac{1}{|B|}\int_{B}w(y)^s\ dy \right)^{\frac{1}{s}}\leq C \frac{1}{|B|}\int_{B}w(y)\ dy.
\end{equation}
Since all weights in $A_p$ satisfy \eqref{Rhprop} for a certain exponent $s$, it is possible to describe the $A_p$ weights in a different way. More precisely, we can say that $w$ is in the reverse H\"older class $RH_s$ if it satisfies \eqref{Rhprop} and the constant $C$ defines the characteristic of the weight and is denoted by $[w]_{RH_s}$. This definition can be also extended to $s=\infty$ and the constant $C$ will be exactly
\begin{equation*}
[w]_{RH_\infty}:=\sup_B \bigg(\esssup_{y\in B} w(y)\bigg)\bigg(\frac{1}{|B|}\int_B w(y)dy \bigg) <+\infty.
\end{equation*}
Reverse H\"older and $A_p$ classes are related as follows:

\begin{proposition}\label{prop:Ap-Rh}
Let $q\in[1,\infty]$ and $s>1$. Then $w\in A_q\cap RH_s$ if and only if $w^s\in A_{1+s(q-1)}$. Moreover,
\[[w^s]_{A_{1+s(q-1)}}\leq[w]_{A_q}^s[w]_{RH_s}^s.\]
\end{proposition}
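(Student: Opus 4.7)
The approach is to observe the algebraic identity $p-1=s(q-1)$ for $p:=1+s(q-1)$, which forces the dual exponent $s/(p-1)$ appearing in the $A_p$ condition for $w^s$ to coincide with the $1/(q-1)$ appearing in the $A_q$ condition for $w$. Once this matching is noted, both implications reduce to a direct comparison of $[w^s]_{A_p}$ with $[w]_{A_q}\,[w]_{RH_s}$ via H\"older's and Jensen's inequalities, with essentially no room left for surprises.

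For the quantitative forward direction, I would start by rewriting
\begin{equation*}
[w^s]_{A_p}=\sup_B\Bigl(\aver{B}w^s\,dy\Bigr)\Bigl(\aver{B}w^{-1/(q-1)}\,dy\Bigr)^{s(q-1)},
\end{equation*}
and then invoke the $RH_s$ condition to replace the first factor by $[w]_{RH_s}^s\bigl(\aver{B}w\,dy\bigr)^s$. Pulling the $s$-th power out of the supremum, what remains is exactly $[w]_{A_q}^s$, and this yields the claimed inequality $[w^s]_{A_p}\le[w]_{A_q}^s[w]_{RH_s}^s$.

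For the converse, assuming $w^s\in A_p$, the lower Jensen inequality $\bigl(\aver{B}w\bigr)^s\le\aver{B}w^s$ inserted into the $A_p$ bound, followed by taking $s$-th roots, extracts $w\in A_q$ with $[w]_{A_q}\le[w^s]_{A_p}^{1/s}$. To recover $RH_s$, I would use the other Jensen estimate $\bigl(\aver{B}w^{-1/(q-1)}\bigr)^{-(q-1)}\le\aver{B}w$ together with the $A_p$ bound rewritten as $\aver{B}w^s\le[w^s]_{A_p}\bigl(\aver{B}w^{-1/(q-1)}\bigr)^{-s(q-1)}$; combining these gives $\aver{B}w^s\le[w^s]_{A_p}\bigl(\aver{B}w\bigr)^s$, which is exactly the $RH_s$ condition.

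The endpoint $q=1$ is handled analogously after replacing the $A_q$ formula by the $A_1$ condition with $\text{ess\,inf}$: the forward direction combines $\aver{B}w\le[w]_{A_1}\,\text{ess\,inf}_B w$ with $RH_s$ to produce $\aver{B}w^s\le[w]_{A_1}^s[w]_{RH_s}^s\,\text{ess\,inf}_B w^s$, and the reverse direction is again a Jensen argument. I do not anticipate any genuine obstacle: the only step demanding care is the exponent bookkeeping, and the reason the equivalence is so clean at precisely $p=1+s(q-1)$ is that this is the unique value of $p$ for which the Muckenhoupt dual exponents line up, so the entire proof is essentially dictated by this algebraic coincidence.
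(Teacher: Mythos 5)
Your proof is correct, and the paper itself gives no argument for this proposition (it is recalled from the cited reference on $A_p$--$RH_s$ factorization); your computation — matching the dual exponents via $p-1=s(q-1)$, then using $RH_s$ for the forward bound and Jensen/H\"older for the converse — is exactly the standard proof of that cited result. The only (inessential) omission is the endpoint $q=\infty$, which reduces to the finite-$q$ case since $A_\infty=\bigcup_{q<\infty}A_q$.
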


We obtain the following weighted norm estimates for the Bochner-Riesz operator:
\begin{theorem}\label{thm:weightsBR-general}
Let $1<p_0<2$, and $\bar{\delta}_n(p_0)$ as in \eqref{eq:delta_n-bar} . For every $p_0<p<2$, with $\delta>\bar{\delta}_n(p_0)$ and for all weights $w \in A_{\frac{p}{p_0}}\cap RH_{\left( \frac{2}{p}  \right)'}$
\begin{equation}
\label{eq:weight-below2}
\|\ic{B}^\delta\|_{L^p(w) \to L^p(w)}\leq C\left([w]_{A_{\frac{p}{p_0}}}[w]_{RH_{(\frac{2}{p})'}}\right)^\alpha,
\end{equation}
with $\alpha:=max\{1/(p-p_0),1/(2-p)\}$ and $C:=C(\delta,p,p_0,n)$ a constant. Similarly, if $2<p<p_0'$ then for all weights $w\in A_{\frac{p}{2}} \cap RH_{\left( \frac{p_0'}{p}  \right)'}$
\begin{equation}\label{eq:weight-above2}
\|\ic{B}^\delta\|_{L^p(w) \to L^p(w)}\leq C\left([w]_{A_{\frac{p}{2}}}[w]_{RH_{(\frac{p_0'}{2})'}}\right)^\alpha,
\end{equation}
with $\alpha:=max\{1/(p-2),(p_0'-2)/(p_0'-p)\}$. Moreover, we obtain that 

\begin{equation}
\label{eq:weight-L2}
\ic{B}^\delta: L^2(w) \to L^2(w), \quad \text{for all } w  \text{ such that }  w^{\frac{2p_0}{2-p_0}} \in A_2.
\end{equation}
\end{theorem}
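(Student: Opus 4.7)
The plan is to deduce the three weighted bounds from the sparse bilinear control of Theorem \ref{thm:sparse_coll} together with the general sparse-to-weighted dictionary developed in \cite{weights_beyond_CZ}. Recall the abstract principle: if an operator $T$ is dominated pairwise by a sparse bilinear form
\[
|\langle Tf,g\rangle|\lesssim \sum_{Q\in\calS}\langle|f|^{r}\rangle_{6Q}^{1/r}\langle|g|^{s'}\rangle_{6Q}^{1/s'}|Q|,
\]
then for every $r<p<s$ and every weight $w\in A_{p/r}\cap RH_{(s/p)'}$ one obtains a bound on $\|T\|_{L^p(w)\to L^p(w)}$ by a fixed power of the product $[w]_{A_{p/r}}[w]_{RH_{(s/p)'}}$, the power being precisely the $\alpha$ appearing in \eqref{eq:weight-below2}. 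The first estimate \eqref{eq:weight-below2} is then the direct application of this principle with $r=p_0$ and $s=2$ to the sparse form supplied by Theorem \ref{thm:sparse_coll}.

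For \eqref{eq:weight-above2}, the plan is to exploit the self-adjointness of $\ic{B}^\delta$ on $L^2$, a consequence of its symbol $(1-|\xi|^2)_+^\delta$ being real and radial. Writing $\langle \ic{B}^\delta f,g\rangle=\langle \ic{B}^\delta g,f\rangle$ and applying Theorem \ref{thm:sparse_coll} to the pair $(g,f)$ produces a sparse bilinear bound of dual type, i.e.\ with the roles of $f$ and $g$ interchanged. Feeding this dual sparse form into the abstract principle with $r=2$ and $s=p_0'$ yields \eqref{eq:weight-above2} on the range $2<p<p_0'$ with the stated quantitative constant.

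The $L^2$ estimate \eqref{eq:weight-L2} cannot be obtained by direct continuation of \eqref{eq:weight-below2} or \eqref{eq:weight-above2} to the endpoint $p=2$, since the reverse Hölder index $(2/p)'$ blows up there. The plan is instead to apply Stein-Weiss complex interpolation with change of measure between \eqref{eq:weight-below2} at some auxiliary $p_1\in(p_0,2)$ and \eqref{eq:weight-above2} at the conjugate exponent $p_2=p_1'\in(2,p_0')$, with interpolation parameter $\theta=1/2$. One selects the endpoint weights as suitable powers of the target weight $w$, and then invokes Proposition \ref{prop:Ap-Rh} to repackage each endpoint condition $w\in A_{p_i/r_i}\cap RH_{(s_i/p_i)'}$ as a single power-weight $A_q$ condition. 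After optimizing the free parameter $p_1$ so as to collapse the combined hypothesis on $w$, the resulting requirement reduces to $w^{2p_0/(2-p_0)}\in A_2$, which is exactly \eqref{eq:weight-L2}.

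The main technical obstacle is the last step: the two endpoint classes $A_{p_1/p_0}\cap RH_{(2/p_1)'}$ and $A_{p_2/2}\cap RH_{(p_0'/p_2)'}$ each involve two indices, and one must track Muckenhoupt and reverse Hölder constants simultaneously through the interpolation and then recombine them as a single $A_2$ condition on a prescribed power of $w$. The first two parts of the theorem are, by contrast, essentially a black-box application of the sparse-to-weighted machinery of \cite{weights_beyond_CZ}. As an alternative to interpolation, one could equally well appeal to a sparse-form version of Rubio de Francia extrapolation to reach the $L^2$ endpoint.
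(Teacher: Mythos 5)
Your proposal is correct and takes essentially the same route as the paper, whose proof is a one-liner: apply \cite[Proposition 6.4]{weights_beyond_CZ} to the sparse form of Theorem \ref{thm:sparse_coll} (and, for $2<p<p_0'$, to its dual form obtained from self-adjointness) and then ``by interpolation'' obtain \eqref{eq:weight-L2}. Your Stein--Weiss step does work out as claimed: choosing the auxiliary exponent $p_1=4p_0/(2+p_0)$ and endpoint weights $w^{p_1/2}$ and $w^{p_1'/2}$, Proposition \ref{prop:Ap-Rh} converts both endpoint hypotheses into exactly the single condition $w^{2p_0/(2-p_0)}\in A_2$.
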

\begin{proof}
To prove \eqref{eq:weight-below2} and \eqref{eq:weight-above2} we apply \cite[Proposition 6.4]{weights_beyond_CZ} to the bilinear estimate \eqref{eq:bilin} for the respective range of $p$. By interpolation we obtain \eqref{eq:weight-L2}.

\end{proof}

\begin{remark}\label{re:compareWtQuan}
\rm{A few comments regarding Theorem \ref{thm:weightsBR-general} need to be made:
\begin{itemize}
\item We note that in \cite{KL} some extension in terms of mixed $A_p-A_\infty$ characteristic of the weight (as well as two weights inequalities) have been obtained for the weighted estimates of sparse bilinear form, initially obained in \cite[Proposition 6.4]{weights_beyond_CZ}.  Therefore, this could be applied in our current situation. For example, by combining with Theorem \ref{thm:general-sparse-bis}, \cite{KL} yields the following: in dimension 2, as soon as $\delta>1/6$ then we have
$$ \| \ic{B}^\delta \|_{L^2(w) \to L^2(w)} \lesssim [w^3]_{A_2}^{1/6} [w^3+w^{-3}]_{A_\infty}^{1/2}.$$
\item We believe that the quantitative estimates \eqref{eq:weight-below2} and \eqref{eq:weight-above2} are the best as far as we know because if we track the constants in Theorem \ref{th:knownW} \eqref{item-christ} and we apply extrapolation we do not recover such result for the indicated range of $p$.
\item Following Remark \ref{re:p0delta}, we can rephrase the statement in Theorem \ref{thm:weightsBR-general}, so that the range of exponents $p$, and all the other parameters depend on $\delta$. The details are left to the reader.
 \end{itemize}
}
\end{remark}

Finally, the vector-valued estimate for $\ic B^\delta$ results from Theorem \ref{thm:general-sparse-bis}. Accurately, if we apply \cite[Theorem 4.9]{AuscherMartell} to bilinear form estimate \eqref{eq:bilin3} we obtain the following:

\begin{corollary} \label{cor:vector-valued}
Let $n=2$ and $p,q \in [6/5,6]$ be any exponent such that
\begin{equation} \left|\frac{1}{p}-\frac{1}{q}\right| < \frac{1}{3}. \label{eq:condi} \end{equation}
Then for any $\delta > 1/6$, $\ic{B}^\delta$ admits $L^p(\ell^q)$ estimates.
\end{corollary}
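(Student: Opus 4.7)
The plan is to combine the sparse bilinear form of Theorem \ref{thm:general-sparse-bis} with the vector-valued extrapolation principle of Auscher-Martell, which is essentially what the sentence preceding the statement already advertises.

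Given target exponents $p, q \in [6/5, 6]$ with $|1/p - 1/q| < 1/3$, I would first choose auxiliary parameters $p_0, q_0 \in [6/5, 6]$ satisfying $p_0 < \min(p, q)$, $q_0 > \max(p, q)$, and $1/p_0 - 1/q_0 \leq 1/3$. Such a selection is possible because $1/p, 1/q$ lie in $[1/6, 5/6]$ and the quantity $|1/p - 1/q|$ is \emph{strictly} below $1/3$, so one can open the gap by the same small amount on both sides while staying inside $[1/6,5/6]$ and keeping the spread below $1/3$. This elementary geometric check is where hypothesis \eqref{eq:condi} is used.

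Next, I would invoke Theorem \ref{thm:general-sparse-bis} for the pair $(p_0, q_0)$ to obtain, for every compactly supported $f,g$, a sparse collection $\calS = \calS(f,g)$ with
\begin{equation*}
|\langle \ic{B}^\delta f, g \rangle| \lesssim \sum_{P \in \calS} \left( \aver{6P} |f|^{p_0}\, dx \right)^{1/p_0} \left( \aver{6P} |g|^{q_0'}\, dx \right)^{1/q_0'} |P|.
\end{equation*}
Exactly as in the proof of Theorem \ref{thm:weightsBR-general}, such a sparse bilinear estimate implies, through \cite[Proposition 6.4]{weights_beyond_CZ}, the scalar weighted bounds $\ic{B}^\delta : L^r(w) \to L^r(w)$ for every $r \in (p_0, q_0)$ and every weight $w \in A_{r/p_0} \cap RH_{(q_0/r)'}$, with quantitative control in terms of the weight characteristics.

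Finally, the vector-valued extrapolation of \cite[Theorem 4.9]{AuscherMartell} upgrades this full family of scalar weighted estimates into $\ic{B}^\delta : L^s(\ell^t) \to L^s(\ell^t)$ for every pair $s, t \in (p_0, q_0)$. Since $p_0 < p, q < q_0$ by our construction, we conclude the desired $L^p(\ell^q)$ bound. There is no real obstacle in this argument: the heavy lifting is already done by Theorem \ref{thm:general-sparse-bis}, and the only slightly delicate point is the parameter selection in the first step, whose feasibility is an easy exercise on the interval $[1/6, 5/6]$.
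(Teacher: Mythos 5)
Your argument follows exactly the route the paper takes: its proof of Corollary \ref{cor:vector-valued} is the single sentence ``apply \cite[Theorem 4.9]{AuscherMartell} to the bilinear form estimate \eqref{eq:bilin3}'', and the chain you spell out --- sparse bilinear form from Theorem \ref{thm:general-sparse-bis}, scalar weighted bounds via \cite[Proposition 6.4]{weights_beyond_CZ} exactly as in the proof of Theorem \ref{thm:weightsBR-general}, then vector-valued extrapolation --- is precisely what that sentence compresses. The two middle steps are applied correctly.

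The one point where you overclaim is the feasibility of the parameter selection. You need $p_0<\min(p,q)$ and $q_0>\max(p,q)$ with $p_0,q_0\in[6/5,6]$, i.e.\ $1/p_0>\max(1/p,1/q)$ and $1/q_0<\min(1/p,1/q)$ with both reciprocals still lying in $[1/6,5/6]$. This is impossible precisely when $\min(p,q)=6/5$ or $\max(p,q)=6$, cases which the closed-interval hypothesis of the corollary allows (e.g.\ $p=6/5$, $q=3/2$ satisfies \eqref{eq:condi}). So your claim that one can ``open the gap by the same small amount on both sides while staying inside $[1/6,5/6]$'' fails at the boundary, and your argument as written yields the conclusion only for $p,q$ in the open interval $(6/5,6)$. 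To be fair, that open range is what \cite[Theorem 4.9]{AuscherMartell} naturally produces and the paper's one-line proof carries the same limitation; the diagonal endpoint cases $p=q\in\{6/5,6\}$ reduce to the known scalar bounds, but the off-diagonal endpoint cases would require a separate argument (or the statement should be read with open intervals). Apart from this boundary caveat, the proof is complete and matches the paper's.
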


We point out that on the range $p,q \in [6/5,6]$, if $p,q\geq 2$ (and so satisfies \eqref{eq:condi}) then vector-valued estimates could be obtained from Theorem \ref{th:knownW} (as well as if $p,q\leq 2$ by duality). The novelty here is to allow the situations $p<2<q$ or $q<2<p$ with \eqref{eq:condi}, which does not seem to be a direct application of previously known results.

\nocite{SteinBigBook}
\bibliographystyle{plain}
\bibliography{harmonic.bib}

\begin{thebibliography}{10}

\bibitem{AIS}
Kari Astala, Tadeusz Iwaniec, and Eero Saksman.
\newblock Beltrami operators in the plane.
\newblock {\em Duke Math. J.}, 107(1):27--56, 2001.

\bibitem{AuscherMartell}
Pascal Auscher and Jos{\'e}~Mar{\'{\i}}a Martell.
\newblock Weighted norm inequalities, off-diagonal estimates and elliptic
  operators. {I}. {G}eneral operator theory and weights.
\newblock {\em Adv. Math.}, 212(1):225--276, 2007.

\bibitem{weights_beyond_CZ}
Fr{\'e}d{\'e}ric Bernicot, Dorothee Frey, and Stefanie Petermichl.
\newblock Sharp weighted norm estimates beyond {C}alder\'on-{Z}ygmund theory.
\newblock 34 pages, to appear in {\it Anal. \& PDE}, 2016.

\bibitem{Buckley}
Stephen~M. Buckley.
\newblock Estimates for operator norms on weighted spaces and reverse {J}ensen
  inequalities.
\newblock {\em Trans. Amer. Math. Soc.}, 340(1):253--272, 1993.

\bibitem{CarMax}
Anthony Carbery.
\newblock A weighted inequality for the maximal {B}ochner-{R}iesz operator on
  {${\bf R}^2$}.
\newblock {\em Trans. Amer. Math. Soc.}, 287(2):673--680, 1985.

\bibitem{CarSeeger}
Anthony Carbery and Andreas Seeger.
\newblock Weighted inequalities for {B}ochner-{R}iesz means in the plane.
\newblock {\em Q. J. Math.}, 51(2):155--167, 2000.

\bibitem{Carleson-Sjolin}
Lennart Carleson and Per Sj{\"o}lin.
\newblock Oscillatory integrals and a multiplier problem for the disc.
\newblock {\em Studia Math.}, 44:287--299. (errata insert), 1972.
\newblock Collection of articles honoring the completion by Antoni Zygmund of
  50 years of scientific activity, III.

\bibitem{CDL}
Mar\'ia Carro, Javier Duoandikoetxea, and Mar\'ia Lorente.
\newblock Weighted estimates in a limited range with applications to the
  {B}ochner-{R}iesz operators.
\newblock {\em Indiana Univ. Math. J.}, 61(4):1485--1511, 2012.

\bibitem{Christ-weightedBR-aeConv}
Michael Christ.
\newblock On almost everywhere convergence of {B}ochner-{R}iesz means in higher
  dimensions.
\newblock {\em Proc. Amer. Math. Soc.}, 95(1):16--20, 1985.

\bibitem{CiVaStem}
{\'O}scar Ciaurri, Krzysztof Stempak, and Juan~L. Varona.
\newblock Uniform two-weight norm inequalities for {H}ankel transform
  {B}ochner-{R}iesz means of order one.
\newblock {\em Tohoku Math. J. (2)}, 56(3):371--392, 2004.

\bibitem{Conde-Rey}
Jos\'e~M. Conde-Alonso and Guillermo Rey.
\newblock A pointwise estimate for positive dyadic shifts and some
  applications.
\newblock \url{http://arxiv.org/abs/1409.4351}, Sept 2014.
\newblock 19 pages; online.

\bibitem{CorDisc}
A.~C{\'o}rdoba.
\newblock An integral inequality for the disc multiplier.
\newblock {\em Proc. Amer. Math. Soc.}, 92(3):407--408, 1984.

\bibitem{Duo-book}
Javier Duoandikoetxea.
\newblock {\em Fourier analysis, Graduate Studies in Mathematics}, volume~29.
\newblock American Mathematical Society, Providence, RI, Translated and revised
  from the 1995 Spanish original by David Cruz-Uribe, 2001.

\bibitem{DMOS}
Javier Duoandikoetxea, Adela Moyua, Osane Oruetxebarria, and Edurne Seijo.
\newblock Radial {$A_p$} weights with applications to the disc multiplier and
  the {B}ochner-{R}iesz operators.
\newblock {\em Indiana Univ. Math. J.}, 57(3):1261--1281, 2008.

\bibitem{JaviRdF}
Javier Duoandikoetxea and Jos{\'e}~L. Rubio~de Francia.
\newblock Maximal and singular integral operators via {F}ourier transform
  estimates.
\newblock {\em Invent. Math.}, 84(3):541--561, 1986.

\bibitem{ball_multiplier}
Charles Fefferman.
\newblock The multiplier problem for the ball.
\newblock {\em Annals of Mathematics}, pages 330--336, 1971.

\bibitem{Feff-BR-restr}
Charles Fefferman.
\newblock A note on spherical summation multipliers.
\newblock {\em Israel J. Math.}, 15:44--52, 1973.

\bibitem{FKP}
Robert~A. Fefferman, Carlos~E. Kenig, and Jill Pipher.
\newblock The theory of weights and the {D}irichlet problem for elliptic
  equations.
\newblock {\em Ann. of Math. (2)}, 134(1):65--124, 1991.

\bibitem{GrafakosMF}
Loukas Grafakos.
\newblock {\em Modern {F}ourier analysis}, volume 250 of {\em Graduate Texts in
  Mathematics}.
\newblock Springer, New York, 2009.

\bibitem{Herz}
Carl~S. Herz.
\newblock On the mean inversion of {F}ourier and {H}ankel transforms.
\newblock {\em Proc. Nat. Acad. Sci. U. S. A.}, 40:996--999, 1954.

\bibitem{Hytonen}
Tuomas~P. Hyt\"onen.
\newblock The sharp weighted bound for general {C}alder\'on-{Z}ygmund
  operators.
\newblock {\em Ann. of Math. (2)}, 175(3):1473--1506, 2012.

\bibitem{Ap-RH}
Raymond Johnson and Christoph~J. Neugebauer.
\newblock Change of variable results for {$A_p$}- and reverse {H}\"older {${\rm
  RH}_r$}-classes.
\newblock {\em Trans. Amer. Math. Soc.}, 328(2):639--666, 1991.

\bibitem{LaceyA_2}
Michael Lacey.
\newblock {An elementary proof of the $A_2$ Bound}.
\newblock 14 pages, to appear in {\it Israel J. Math.}, 2015.

\bibitem{Lerner2}
Andrei~K. Lerner.
\newblock On an estimate of {C}alder{\'o}n-{Z}ygmund operators by dyadic
  positive operators.
\newblock {\em J. Anal. Math.}, 121:141--161, 2013.

\bibitem{Lerner3}
Andrei~K. Lerner.
\newblock A simple proof of the {$A_2$} conjecture.
\newblock {\em Int. Math. Res. Not. IMRN}, 14:3159--3170, 2013.

\bibitem{LernerNazarov}
Andrei~K. Lerner and Fedor Nazarov.
\newblock Intuitive dyadic calculus: the basics.
\newblock \url{http://arxiv.org/abs/1508.05639}, August 2015.
\newblock online.

\bibitem{KL}
Kangwei Li.
\newblock Two weight inequalities for bilinear forms.
\newblock \url{http://arXiv.org/abs/1511.07250v1}, November 2015.
\newblock 16 pages; online.

\bibitem{CorrigeLiSun}
Kangwei Li and Wenchang Sun.
\newblock Corrigendum to ``{S}harp bound of the maximal {B}ochner-{R}iesz
  operator in weighted {L}ebesgue space'' [{J}. {M}ath.\ {A}nal.\ {A}ppl.\ 395
  (2012) 385--392] [mr2943630].
\newblock {\em J. Math. Anal. Appl.}, 405(2):746, 2013.

\bibitem{LuYan}
Shanzhen Lu and Dunyan Yan.
\newblock {\em {B}ochner {R}iesz Means on {E}uclidean Spaces}.
\newblock World Scientific, 2013.

\bibitem{LPR}
Teresa Luque, Carlos P{\'e}rez, and Ezequiel Rela.
\newblock Optimal exponents in weighted estimates without examples.
\newblock {\em Math. Res. Lett.}, 22(1):183--201, 2015.

\bibitem{PetermichlVolberg}
Stefanie Petermichl and Alexander Volberg.
\newblock Heating of the {A}hlfors-{B}eurling operator: weakly quasiregular
  maps on the plane are quasiregular.
\newblock {\em Duke Math. J.}, 112(2):281--305, 2002.

\bibitem{ShiSun}
Xianliang Shi and Qiyu Sun.
\newblock Weighted norm inequalities for {B}ochner-{R}iesz operators and
  singular integral operators.
\newblock {\em Proc. Amer. Math. Soc.}, 116(3):665--673, 1992.

\bibitem{Stein_problems}
Elias~M. Stein.
\newblock Some problems in harmonic analysis.
\newblock In {\em Harmonic analysis in {E}uclidean spaces ({P}roc. {S}ympos.
  {P}ure {M}ath., {W}illiams {C}oll., {W}illiamstown, {M}ass., 1978), {P}art
  1}, Proc. Sympos. Pure Math., XXXV, Part, pages 3--20. Amer. Math. Soc.,
  Providence, R.I., 1979.

\bibitem{SteinBigBook}
Elias~M. Stein.
\newblock {\em Harmonic analysis: real-variable methods, orthogonality, and
  oscillatory integrals}, volume~43 of {\em Princeton Mathematical Series}.
\newblock Princeton University Press, Princeton, NJ, 1993.
\newblock With the assistance of Timothy S. Murphy, Monographs in Harmonic
  Analysis, III.

\bibitem{Tao_weak-bounds}
Terence Tao.
\newblock Weak-type endpoint bounds for {R}iesz means.
\newblock {\em Proc. Amer. Math. Soc.}, 124(9):2797--2805, 1996.

\bibitem{Tao-BR-impliesRestr}
Terence Tao.
\newblock The {B}ochner-{R}iesz conjecture implies the restriction conjecture.
\newblock {\em Duke Math. J.}, 96(2):363--375, 1999.

\bibitem{Tao_restriction-conjecture}
Terence Tao.
\newblock Some recent progress on the restriction conjecture.
\newblock In {\em Fourier analysis and convexity}, Appl. Numer. Harmon. Anal.,
  pages 217--243. Birkh\"auser Boston, Boston, MA, 2004.

\bibitem{Tomas:tomas-Stein}
Peter~A. Tomas.
\newblock A restriction theorem for the {F}ourier transform.
\newblock {\em Bull. Amer. Math. Soc.}, 81:477--478, 1975.

\bibitem{Vargas}
Ana Vargas.
\newblock Weighted weak type {$(1,1)$} bounds for rough operators.
\newblock {\em J. London Math. Soc. (2)}, 54(2):297--310, 1996.

\end{thebibliography}

\end{document}